\DeclarePairedDelimiter{\abs}{\lvert}{\rvert}
\DeclarePairedDelimiter{\norm}{\lVert}{\rVert}
\newcommand{\dz}{{\mathrm{d}z}}
\newcommand{\dt}{{\mathrm{d}t}}
\theoremstyle{plain}
\newtheorem{thm}{Theorem}[section]
\newtheorem{lemma}[thm]{Lemma}
\newtheorem{prop}[thm]{Proposition}
\newtheorem{cor}[thm]{Corollary}
\theoremstyle{remark}	
\newtheorem{rem}[thm]{Remark}
\theoremstyle{definition}	
\newtheorem{definizione}[thm]{Definition}
\begin{document}
	\title{Mountain Pass Solutions for an entire semipositone problem involving the Grushin Subelliptic Operator}
	\author{Giovanni Molica Bisci\thanks{Partially supported by GNAMPA 2024: "Aspetti geometrici e analitici di alcuni problemi locali e non-locali in mancanza di compattezza"} \and Paolo Malanchini \and Simone Secchi\thanks{Partially supported by GNAMPA 2024: "Aspetti geometrici e analitici di alcuni problemi locali e non-locali in mancanza di compattezza"}}
	\maketitle
	\setcounter{secnumdepth}{2} 
	\setcounter{tocdepth}{2}
	
\begin{abstract}
\noindent \textbf{Abstract.} For $N\ge 3$ we study the following semipositone problem
\begin{equation*}
	-\Delta_\gamma u = g(z) f_a(u) \quad \hbox{in $\mathbb{R}^N$}, 
\end{equation*}
where $\Delta_\gamma$ is the Grushin operator
\begin{displaymath}
	\Delta_	\gamma u(z) = \Delta_x u(z) + \abs{x}^{2\gamma} \Delta_y u (z) \quad (\gamma\ge 0),
\end{displaymath}
$g\in L^1(\mathbb{R}^N)\cap L^\infty(\mathbb{R}^N)$ is a positive function,  $a>0$ is a parameter and $f_a$ is a continuous function on $\mathbb{R}$ that coincides with $f(t) -a$ for $t\in\mathbb{R}^+$, where $f$ is a continuous function with subcritical and Ambrosetti-Rabinowitz type growth and which satisfies $f(0) = 0$.  Depending on the range of $a$, we obtain the existence of positive mountain pass solutions in $D_\gamma(\mathbb{R}^N)$, extending the results found in  \cite{alves,biswas2023study, biswas2024semipositone}.
\end{abstract}

\section{Introduction}

The term \emph{semipositone} is usually associated to a semilinear elliptic equation of the form
\begin{displaymath}
    -\Delta u = f(u) \quad\hbox{in $\Omega$}
\end{displaymath}
in which the nonlinearity $f$ is such that $f(0)<0$, see \cite{Castro1993}. In particular, $u=0$ is not a subsolution of the equation, and it was remarked in \cite{Lions} that this fact may introduce several difficulties in the analysis of the problem. When the domain $\Omega$ is unbounded the condition $f(0)<0$ is also a source of concern from the viewpoint of variational methods, since non-zero constants are not integrable on $\Omega$.

In literature, second order semipositone problems have been widely studied. In the seminal paper \cite{brown_shivaji}, Brown and Shivaji  considered the perturbed bifurcation problem $-\Delta u = \lambda (u-u^3) -\varepsilon$ on a bounded set $\Omega\subset\mathbb{R}^N$ with Dirichlet boundary conditions and $\varepsilon,\lambda>0$.

 Some years later, Castro and Shivaji  (see~\cite{castroshivaji}) studied non-negative solutions for the boundary value problem
\[
\begin{cases}
	-u''(x) = \lambda f(u(x)), ~x\in (0,1)\\
	u(0) = 0  = u(1),
\end{cases}
\]
where $\lambda>0$ is a constant and $f\in C^2(\mathbb{R})$ satisfies $f(0)<0$.

Subsequently, in \cite{caldwell} the authors considered positive solutions for a class of multiparameter semipositone problem
 \[
 \begin{cases}
 	-\Delta u = \lambda g(u) + \mu f(u) ~&\hbox{in $\Omega$},\\
 	u=0 ~&\hbox{on $\partial\Omega$},
 \end{cases}
 \] 
with $\lambda>0$, $\mu>0$ and $\Omega$ is a smooth bounded domain in $\mathbb{R}^N$, $N\ge 2$. They assume $g(0)>0$
and superlinear while $f(0)<0$ sublinear, and eventually strictly positive. For
fixed $\mu$, they establish existence and multiplicity for $\lambda$ small (when the problem becomes semipositone), and nonexistence
for $\lambda$ large. 

\medskip

To the best of our knowledge Alves, Holanda, and Santos in \cite{alves} were the first to study semipositone problems in the entire space $\mathbb{R}^N$, $N\ge 3$, for the classical Laplace operator, described by the equation
\[
\begin{cases}
	-\Delta u = h(x) \left(f(u) -a\right) ~&\hbox{in $\mathbb{R}^N$, }\\
	u>0 ~&\hbox{in $\mathbb{R}^N$},
\end{cases}
\]
with $f(0) =0$ and $a>0$. The function $f\in C(\mathbb{R}^+)$ is a locally Lipschitz function with subcritical growth which satisfies the Ambrosetti-Rabinowitz condition, and the weight $h$ is a continuous positive function bounded by a radial function $P\in L^1(\mathbb{R}^N)\cap L^\infty(\mathbb{R}^N)$.

 Subsequently, many authors extended their result to other operators. In \cite{biswas2022fourth}, Biswas, Das and Sarkar examined the same problem for a fourth-order operator. Additionally, in \cite{biswas2023study}, Biswas investigated this problem for the fractional Laplacian. In the very recent preprint \cite{biswas2024semipositone}, a semipositone problem with the fractional $p$-Laplace operator was investigated.

\bigskip

In this paper we will be concerned with the following semipositone problem associated with the Grushin operator
\begin{equation}\tag{SP}
	\label{eq:SP}
	-\Delta_\gamma u = g(z) f_a(u)~ \hbox{in $\mathbb{R}^N$}, 
\end{equation}
where the function $g\in L^1(\mathbb{R}^N)\cap L^\infty (\mathbb{R}^N)$ is positive, $a>0$ is a parameter and the function $f_a\colon \mathbb{R}\to\mathbb{R}$ is defined as:
\begin{equation*}
%	\label{eq:function}
	f_a(t)=
	\begin{cases}
		f(t) -a &\hbox{if $t\ge 0$},\\
		-a(t+1) &\hbox{if $t\in (-1,0)$},\\
		0 &\hbox{if $t\le -1$},
	\end{cases}
\end{equation*}
where $f$ is a contiuous function defined on $[0,+\infty)$ is such that  $f(0)=0$ and
\begin{enumerate}[label=(\textbf{f$_\arabic*$})]
	%\item $\lim_{t\to 0^+} \frac{f(t)}{t}=0$, $\lim_{t\to\infty} \frac{f(t)}{t^{\alpha-1}}\le C(f)$ for some $\alpha\in (2,2^*_\gamma)$ and $C(f)>0$,
    \item $\lim_{t\to 0^+} \frac{f(t)}{t}=0$, $\lim_{t\to\infty} \frac{f(t)}{t^{\alpha-1}} < +\infty$ for some $\alpha\in (2,2^*_\gamma)$,
	\item (Ambrosetti-Rabinowitz) there exist $\theta>2$ and $t_0>0$ such that
	\begin{displaymath}
		0<\theta F(t) \le tf(t)
	\end{displaymath}
for all $t>t_0$, where $F(t) = \int_{0}^{t} f(s)\,\mathrm{d}s$.
\end{enumerate}
For the reader's convenience, we briefly recall the basic construction of the Baouendi-Grushin operator. We split the euclidean space $\mathbb{R}^N$  as $\mathbb{R}^m \times \mathbb{R}^\ell$, where $m \geq 1$, $\ell \geq 1$ and $m+\ell=N \geq 3$. A generic point $z \in \mathbb{R}^N$ can therefore be written as
\begin{displaymath}
	z = \left(x,y\right) = \left( x_1,\ldots,x_m,y_1,\ldots,y_\ell \right),
\end{displaymath}
where $x \in \mathbb{R}^m$ and $y \in \mathbb{R}^\ell$.
For a differentiable function $u$ we define the Grushin gradient
\begin{displaymath}
	\nabla_\gamma u (z) =  (\nabla_x u(z), \abs{x}^\gamma\nabla_y u(z))= (\partial_{x_1} u(z) , \dots, \partial_{x_m}u(z), \abs{x}^\gamma \partial_{y_{1}}u(z), \dots, \abs{x}^\gamma \partial_{y_{\ell}}u (z)).
\end{displaymath}
The Grushin operator $\Delta_\gamma$ is defined  by\footnote{Some authors use the slightly different definition $\Delta_\gamma = \Delta_x + \left(1+\gamma \right)^2 \vert x \vert^{2\gamma}\Delta_y$.}
\begin{displaymath}
	\Delta_	\gamma u(z) = \Delta_x u(z) + \abs{x}^{2\gamma} \Delta_y u (z),
\end{displaymath} 
where $\Delta_x$ and $\Delta_y$ are the Laplace operators in the variables $x$ and $y$, respectively. A crucial property of the Grushin operator is that it is not uniformly elliptic in the space $\mathbb{R}^N$, since it is degenerate on the subspace $\{0\} \times \mathbb{R}^\ell$.

For a generic $\gamma \geq 0$, it is possible to embed $\Delta_\gamma$ into a family of operators of the form
\begin{displaymath}
	\Delta_\lambda = \sum_{i=1}^N \partial_{x_i} \left( \lambda_i^2 \partial_{x_i} \right),
\end{displaymath}
which have been studied from a geometrical viewpoint in \cite{franchilanconelli1982,franchilanconelli1983,franchilanconelli1984}. Here $\lambda_1,\ldots,\lambda_N$ are given functions which satisfy suitable conditions.

Moreover, the Grushin operator falls into the class of $X$-elliptic operators studied in \cite{gutierrezlanconelli, lanconelli2000x}. In fact, the Grushin operator is uniformly $X$-elliptic with respect to the family of vector fields $X=(X_1,\dots, X_N)$ defined as
\begin{align*}
    X_i &= \frac{\partial}{\partial x_i} \quad\hbox{for $i=1,\ldots,m$} \\
    X_{m+j} &= \vert x \vert^\gamma \frac{\partial}{\partial y_j} \quad\hbox{for $j=1,\ldots,\ell$}.
\end{align*}
%\begin{displaymath}
%	X_i = \frac{\partial}{\partial x_i},~ i = 1,\dots, m %\quad X_{m+j} = \abs{x}^\gamma \frac{\partial}{\partial %y_j},~ j=1,\dots \ell.
%\end{displaymath}
With the same notation we can write the Grushin operator $\Delta_\gamma$ as sum of the vector fields
\[
\Delta_\gamma = \sum_{j=0}^{N} X_j^2.
\]
Note that, when $\gamma$ is an even integer, the $C^\infty$-field $X=\left( X_1,\ldots,X_N \right)$ satisfies the H\"{o}rmander's finite rank condition, see \cite{hormander1967}.

\bigskip

We will look for solutions to \eqref{eq:SP} in the function space
\begin{equation}
	D_\gamma(\mathbb{R}^N) = \left\{u\in L^{2^*_\gamma}(\mathbb{R}^N): \norm{u}_\gamma <\infty \right\}, \label{eq:space}
	\end{equation}
	where
\begin{displaymath}
	\norm{u}_\gamma = \left(\int_{\mathbb{R}^N}^{} \abs{\nabla_\gamma u} ^2 \, \dz \right)^{1/2}.	
\end{displaymath}

By definition, a function $u\in D_\gamma (\mathbb{R}^N)$ is  a weak solution to \eqref{eq:SP} if it satisfies the following variational identity:
\begin{displaymath}
	\forall \varphi\in D_\gamma(\mathbb{R}^N) : \quad \int_{\mathbb{R}^N} \nabla_\gamma u \nabla_\gamma \varphi\,\dz = \int_{\mathbb{R}^N} g(z) f_a(u) \varphi\,\dz.
\end{displaymath}
Two functionals $K_a$ and $I_a$ are defined on $D_\gamma(\mathbb{R}^N)$ by
\begin{align*}
	K_a(u) &= \int_{\mathbb{R}^N} g(z) F_a(u(z))\,\dz \\ 
	I_a(u) &= \frac{1}{2} \norm{u}_\gamma^2 - K_a(u).
\end{align*}
It is standard to prove that $K_a$ and $I_a$ are of class $C^1$, and the corresponding Fr\'{e}chet derivatives are given by
\begin{align*}
	%\label{eq:1}
	&D K_a (u) \colon v\to \int_{\mathbb{R}^N} g(z) f_a(u) v \,\dz ~  \notag\\
	&D I_a (u) \colon v\to \int_{\mathbb{R}^N} \nabla_\gamma u \nabla_\gamma v \,\dz.
\end{align*}
As a consequence, weak solutions of \eqref{eq:SP} are precisely the critical points of $I_a$ on $D_\gamma(\mathbb{R}^N)$.

\bigskip

The magnitude of the constant $a$ and the interplay with the function $g$ are crucial. Specifically, we can prove the existence of positive solutions to \eqref{eq:SP} when the parameter $a$ is sufficiently small.

We will construct  a solution by a mountain pass argument. The positivity of the solution is obtained using the fundamental solution for the Grushin operator.

\bigskip

Our main result can be stated as follows. The distance $d(z)$ appearing in the statement is defined in \eqref{eq:d} below.

\begin{thm}
	\label{thm_main}
Let $f$ satisfy \textbf{(f1)}-\textbf{(f2)} and let $g\in L^1(\mathbb{R}^N)\cap L^\infty(\mathbb{R}^N)$ be positive. Then there exists $a_1>0$ such that for each $a\in (0,a_1)$ \eqref{eq:SP} admits a  weak solution $u_a\in C(\mathbb{R}^N)\cap D_\gamma(\mathbb{R}^N)$ such that $u_a\ge 0$ a.e. in $\{0\}\times\mathbb{R}^\ell$. 
If, in addition,
	\begin{enumerate}[start=3,label={(\bfseries f$_\arabic*$)}]
		\item  $f$ is locally Lipschitz,
	\end{enumerate}
	\begin{enumerate}[start=1,label={(\bfseries g$_\arabic*$)}]
		%\item $d(z)^{N_\gamma -2} \int_{\mathbb{R}^N} \frac{g(\xi)}{d(z-\xi)^{N_\gamma -2}} \mathrm{d}\xi \le C(g)$, where $z\in\mathbb{R}^N\setminus %\{0\}$ and $C(g)>0$,
    \item for all $z\in (\{0\} \times \mathbb{R}^\ell)\setminus \{(0,0)\}$, \begin{displaymath}
        d(z)^{N_\gamma -2} \int_{\mathbb{R}^N} \frac{g(\xi)}{d(z-\xi)^{N_\gamma -2}} \mathrm{d}\xi < \infty,
        \end{displaymath} 
	\end{enumerate}
	then there exists $\tilde{a}\in (0,a_1)$ such that $u_a>0$ a.e. in $\{0\} \times \mathbb{R}^\ell$ for every $a\in (0,\tilde{a})$.
\end{thm}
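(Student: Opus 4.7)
The plan is to construct $u_a$ as a mountain pass critical point of the $C^1$-functional $I_a$ on $D_\gamma(\mathbb{R}^N)$, and then extract its sign on the characteristic set via a comparison with the fundamental solution of $-\Delta_\gamma$. I would first verify the mountain pass geometry: $F_a(0)=0$ gives $I_a(0)=0$, condition $(\mathbf{f_1})$ yields $F(t)\le \varepsilon t^2+C_\varepsilon t^\alpha$ for $t\ge 0$, and the explicit form of $f_a$ on $(-1,0)$ together with its vanishing on $(-\infty,-1]$ produce a uniform linear bound $|F_a(t)|\le a(|t|+1)$ for $t<0$. Combining H\"older's inequality, the Sobolev-type embedding $D_\gamma(\mathbb{R}^N)\hookrightarrow L^{2^*_\gamma}(\mathbb{R}^N)$, and the integrability $g\in L^1\cap L^\infty$, one obtains
\begin{displaymath}
I_a(u)\ge \Bigl(\tfrac12-\varepsilon C\Bigr)\norm{u}_\gamma^2 - C\norm{u}_\gamma^\alpha - a\,C\norm{u}_\gamma,
\end{displaymath}
which, since $\alpha>2$, is bounded below by a positive constant on a small sphere once $a\in(0,a_1)$ with $a_1>0$ sufficiently small; this choice fixes $a_1$. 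The Ambrosetti-Rabinowitz condition $(\mathbf{f_2})$ provides $F(t)\ge c_1 t^\theta - c_2$ with $\theta>2$, from which $I_a(tu_0)\to-\infty$ along any ray through a non-negative $u_0\in D_\gamma(\mathbb{R}^N)\setminus\{0\}$.

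The next step is the Palais-Smale condition: combining $I_a(u_n)\le C$ with the identity $I_a(u_n)-\tfrac{1}{\theta}\langle DI_a(u_n),u_n\rangle$ and $(\mathbf{f_2})$ bounds $\norm{u_n}_\gamma$. Passing to a subsequence with $u_n\rightharpoonup u_a$ weakly in $D_\gamma$, almost everywhere in $\mathbb{R}^N$, and strongly in $L^q_{\mathrm{loc}}$, the delicate point is the limit $\int g f_a(u_n)\varphi\,\dz\to\int g f_a(u_a)\varphi\,\dz$: on a large ball local compactness is enough, and on its complement the tails are controlled by $\int_{B_R^c}g\,\dz$ together with the subcritical bound from $(\mathbf{f_1})$. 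Strong convergence in $D_\gamma$ then follows in the standard way, and the mountain pass theorem supplies a critical point $u_a$ at a positive level. A Moser-type iteration for $\Delta_\gamma$, using $g\in L^\infty$ and the subcritical growth in $(\mathbf{f_1})$, promotes $u_a$ to $L^\infty(\mathbb{R}^N)\cap C(\mathbb{R}^N)$.

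For the sign on $\{0\}\times\mathbb{R}^\ell$, let $\Gamma$ be the fundamental solution of $-\Delta_\gamma$ and define
\begin{displaymath}
W(z)=\int_{\mathbb{R}^N}\Gamma(z,\xi)\,g(\xi)\,\mathrm{d}\xi,
\end{displaymath}
so that $-\Delta_\gamma W=g$ and $\Gamma(z,\xi)\asymp d(z-\xi)^{-(N_\gamma-2)}$. I would then set $v_a:=u_a+aW$ and observe that $f_a(t)+a\ge 0$ for every $t\le 0$ directly from the definition of $f_a$; since on $\{v_a<0\}$ we necessarily have $u_a<-aW\le 0$, the identity $-\Delta_\gamma v_a=g(f_a(u_a)+a)\ge 0$ holds there. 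The maximum principle for $-\Delta_\gamma$ on unbounded domains, coupled with the decay of both $u_a$ and $W$ at infinity, forces $v_a\ge 0$ throughout $\mathbb{R}^N$, so $u_a\ge -aW$. Combined with the kernel control of $W$ on the characteristic set, this gives $u_a\ge 0$ on $\{0\}\times\mathbb{R}^\ell$ for $a\in(0,a_1)$, shrinking $a_1$ if needed.

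The main obstacle is the strict-positivity claim. Hypothesis $(\mathbf{g_1})$ asserts finiteness of $d(z)^{N_\gamma-2}W(z)$ on $(\{0\}\times\mathbb{R}^\ell)\setminus\{0\}$, turning the comparison $u_a\ge -aW$ into a quantitative lower bound of order $O(a)\cdot d(z)^{-(N_\gamma-2)}$ on the characteristic axis. Assumption $(\mathbf{f_3})$ then supplies the regularity needed to apply a subelliptic strong maximum principle or Harnack inequality to $v_a$: because $g>0$ and $f_a(u_a)+a>0$ on $\{u_a<0\}$, $v_a$ is strictly superharmonic on a non-empty open set and cannot vanish on a set of positive $\ell$-dimensional measure inside $\{0\}\times\mathbb{R}^\ell$. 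Combining this with the quantitative bound on $W$ from $(\mathbf{g_1})$ and shrinking $a_1$ further to some $\tilde a\in(0,a_1)$ then yields $u_a>0$ a.e. on $\{0\}\times\mathbb{R}^\ell$. The truly delicate point is that $-\Delta_\gamma$ degenerates precisely on $\{0\}\times\mathbb{R}^\ell$, so the classical strong maximum principle cannot be applied directly; the fundamental-solution comparison is essential to convert the absence of a classical barrier into a quantitative sign on the characteristic set.
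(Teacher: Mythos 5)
The variational half of your plan (mountain pass geometry, Palais--Smale via the Ambrosetti--Rabinowitz identity, Moser iteration to get $u_a\in L^\infty(\mathbb{R}^N)\cap C(\mathbb{R}^N)$) coincides with the paper's Lemmas \ref{lemma:2}--\ref{lemma:4} and is sound. The sign analysis, however, has two genuine gaps. First, your comparison function $W(z)=\int_{\mathbb{R}^N}\Gamma(z,\xi)g(\xi)\,\mathrm{d}\xi$ with $-\Delta_\gamma W=g$ and $\Gamma(z,\xi)\asymp d(z-\xi)^{-(N_\gamma-2)}$ presupposes a global Green function for the Grushin operator with two-sided kernel bounds. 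This is not available: $\Delta_\gamma$ is invariant under translations only in the $y$-variables, so convolving $g$ against the fundamental solution $C\,d(\cdot)^{2-N_\gamma}$ does \emph{not} invert $-\Delta_\gamma$ away from $\{0\}\times\mathbb{R}^\ell$, and the only known Green function is an approximate one with properties too weak for this purpose (this is exactly the point of Remark \ref{rem:concluding}). The representation formula that does hold, from \cite{dambrosiomitidieripohozaev}, is valid only at points $z\in\{0\}\times\mathbb{R}^\ell$, which is why the theorem claims positivity only on the degenerate subspace. Second, even granting $W$, the conclusion $u_a\ge -aW$ with $W\ge 0$ is a \emph{negative} lower bound and does not imply $u_a\ge 0$ anywhere; you need a competing positive lower bound of the same order $d(z)^{-(N_\gamma-2)}$, and your proposal never produces one. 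Your closing appeal to strict superharmonicity of $v_a$ cannot close this gap either, since, as you yourself note, the operator degenerates precisely on $\{0\}\times\mathbb{R}^\ell$ and no strong maximum principle is available there.

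The paper's route supplies exactly the missing positive competitor by passing to the limit $a\to 0$: the solutions $u_a$ are bounded in $D_\gamma(\mathbb{R}^N)$ uniformly in $a$, bounded below in $L^\infty$ by some $\beta>0$ (Lemma \ref{lemma:5}), and converge to a solution $u$ of $-\Delta_\gamma u=g f_0(u)$. Strict positivity of $u$ is obtained by applying the classical strong maximum principle on the complement of the degenerate set $\Sigma=\{0\}\times\mathbb{R}^\ell$ (where $\Delta_\gamma$ is uniformly elliptic) together with a unique-continuation result across $\Sigma$ and the density of $\mathbb{R}^N\setminus\Sigma$; the nontriviality $\Vert u\Vert_\infty\ge\beta$ rules out $u\equiv 0$. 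Then the representation formula on $\Sigma$ gives both $\liminf_{d(z)\to\infty}d(z)^{N_\gamma-2}u(z)>0$ and, using \textbf{(f3)} and \textbf{(g1)}, the weighted uniform convergence $\sup_{z}d(z)^{N_\gamma-2}\vert u_a(z)-u(z)\vert\to 0$ on $\Sigma$; combining the two yields $u_a>0$ a.e.\ on $\Sigma$ for $a$ small. If you want to salvage your argument, you must replace the barrier $W$ by this limiting solution $u$ (or otherwise produce a positive subsolution with the correct $d(z)^{2-N_\gamma}$ decay on $\Sigma$), and you must justify every use of the fundamental solution by restricting to points of $\{0\}\times\mathbb{R}^\ell$.
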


\begin{rem}
	Actually, standard regularity results (see for instance \cite{palatucci2024asymptotic} and the references there-in for the case of the Heisenberg group, but the proof extends to our setting) ensure that $u \in C^{1,\beta}(\mathbb{R}^N)$ with $0<\beta<1$.
\end{rem}

To the best of our knowledge, there are no results in the literature about semipositone problems for degenerate equations of Baouendi-Grushin type. 
Before concluding this introduction, we would like to comment about the positivity in the subspace $\{0\} \times \mathbb{R}^\ell$. In some sense this is an expected result,
since the restriction of $\Delta_\gamma$ to this subspace is a uniformly elliptic operator for which all the standard tools are available. Globally, the Grushin operator
lacks a strong maximum principle for weak solutions and $W^{2,p}$-estimates which are used in order to derive additional regularity for weak solutions. We refer to Remark \ref{rem:concluding}
for more details.

\section{Some inequalities for $f$}

\begin{rem}
    In the rest of the paper we will denote by $C(\ldots)$ a positive constant which depends on the variables between brackets.
\end{rem}

Let us now identify the upper and lower bounds of $f$, $f_a$ and their primitives. Clearly, the function $f_a$ is continuous and the function $F_a$  defined by
\begin{displaymath}
	F_a(t) = 
	\begin{cases}
		F(t) -at ~&\hbox{for $t\ge 0$}, \\
		-\frac{at^2}{2} - at ~&\hbox{for $t\in (-1,0)$}, \\
		\frac{a}{2} ~&\hbox{for $t\le -1$}
	\end{cases}
\end{displaymath}
satisfies $F_a'=f_a$ and $F_a(0)=0$.
The assumption
\begin{align*}
		\lim_{t\to 0^+} \frac{f(t)}{t}=0
\end{align*}
implies that for every $\varepsilon>0$, there exists $t_1(\varepsilon)>0$ such that $f(t)< \varepsilon t$ for $t\in (0,t_1(\varepsilon))$. 
Similarly, assumption
\begin{displaymath}
	\lim_{t\to\infty} \frac{f(t)}{t^{\alpha-1}}<+\infty
\end{displaymath}
implies
\begin{displaymath}
	f(t)\le C(f,t_1(\varepsilon)) t^{\alpha-1} \quad\hbox{for $t\ge t_1(\varepsilon)$}
\end{displaymath}
for some positive constant $C(f,t_1(\varepsilon))$.
%From (\textbf{f1}),
%	\begin{align*}
%		&\lim_{t\to 0^+} \frac{f(t)}{t}=0 \implies ~\hbox{for every $\varepsilon>0$, there exists $t_1(\varepsilon)>0$ such that $f(t)< \varepsilon t$ for $t\in (0,t_1(\varepsilon))$. }\\
%		&\lim_{t\to\infty} \frac{f(t)}{t^{\alpha-1}}\le C(f) \implies f(t)\le C(f,t_1(\varepsilon)) t^{\alpha-1} ~\hbox{for $t\ge t_1(\varepsilon)$}.
%	\end{align*}
	Hence, we have the following bounds for every~$\alpha\in (2,2^*_\gamma]$:
	\begin{equation}
		\label{eq:bounds}
		\abs{f_a(t)} \le \varepsilon\abs{t}+ C(f,t_1(\varepsilon)) \abs{t}^{\alpha -1}+a, \quad F_a(t) \le \varepsilon \abs{t}^2 + C(f,t_1(\varepsilon)) \abs{t}^\alpha + a\abs{t}\quad\hbox{for $t\in\mathbb{R}$}.
	\end{equation}
	By (\textbf{f}$_2$), the function $t \mapsto\frac{F(t)}{t^\theta}$ is non decreasing for $t>t_0$, so there exist two positive constants $M_1$ and $M_2$ such that
	\begin{equation}
		\label{eq:bounds_2}
		F(t) \ge M_1 t^\theta - M_2 \quad\hbox{for all $t\in \mathbb{R}$}.
	\end{equation}
 	\begin{rem}[Ambrosetti-Rabinowitz condition on $f_a$] For $t>t_0$, assumption (\textbf{f}$_2$) yields
		\begin{displaymath}
			\theta F_a(t) = \theta F(t) -\theta a t\le f(t) t - at = t f_a(t).
		\end{displaymath} 
		
		For $t\in [0,t_0]$, by continuity of $F$, there exists a constant $M>0$ independent of $a$, such that
		\begin{displaymath}
			\theta F_a(t) = \theta F(t) -\theta a t \le M -at \le M -a t + f(t)t \le M + tf_a(t)
		\end{displaymath}
		
		For $t\in [-1,0]$,
		\begin{displaymath}
			\theta F_a(t) - tf_a(t) = \left(-\frac{a\theta}{2} +a\right) t^2 - \left(\theta-1\right) at \le c a
		\end{displaymath}
		where $c = \max_{t\in [-1,0]} \left\{\left(-\frac{\theta}{2} +1\right) t^2 - \left(\theta-1\right) t \right\}$ does not depend on $a$.
		
		For $t < -1$,
		\begin{displaymath}
		 \theta F_a(t) = \frac{\theta a}{2} \le \frac{\theta a}{2} + f_a(t) t
		\end{displaymath}
		Choosing $C = \max \left\{c, \frac{\theta}{2} \right\}$ we conclude that the following Ambrosetti-Rabinowitz condition with correction terms holds for $f_a$:
		\begin{equation}
			\label{eq:ARbounds}
			\theta F_a(t) \le t f_a(t) + Ca + M, \quad\hbox{for all $t \in \mathbb{R}$ and $a>0$}.
		\end{equation}
	\end{rem}

\section{A functional setting for the Grushin operator}

The space $D_\gamma(\mathbb{R}^N)$ defined in \eqref{eq:space}
can be also characterized as the completion of $C_0^\infty(\mathbb{R}^N)$ with respect to the norm
\begin{displaymath}
\norm{u}_\gamma = \left(\int_{\mathbb{R}^N}^{} \abs{\nabla_\gamma u} ^2 \, \dz \right)^{1/2},
\end{displaymath}
where
\begin{gather*}
	2_\gamma^* = \frac{2N_\gamma}{N_\gamma -2}
\end{gather*}
is the Sobolev critical exponent associated to $N_\gamma = m + (1+\gamma) \ell$,
%\begin{gather*}
%	N_\gamma = m + (1+\gamma) \ell,
%\end{gather*}
the homogeneous dimension associated to the decomposition $N=m+\ell$.
%As in the basic case $\gamma=0$ one can check that
%\begin{displaymath}
%	D_\gamma(\mathbb{R}^N) = \left\{u\in L^{2^*_\gamma}(\mathbb{R}^N)\mid \nabla_\gamma u \in L^2(\mathbb{R}^N)\right\}.
%\end{displaymath}
Since the Sobolev-Gagliardo-Niremberg inequality proved in \cite[Lemma 5.1]{loiudice2006}
\begin{displaymath}
	\norm{u}_{2^*_\gamma} \le C \norm{u}_\gamma,
\end{displaymath}
holds for all $u\in D_\gamma(\mathbb{R}^N)$ for some positive constant $C=C(m,k,\gamma)$, we have that the embedding
\begin{equation}
	\label{eq:2}
	D_\gamma(\mathbb{R}^N)\hookrightarrow L^{2^*_\gamma}(\mathbb{R}^N)
\end{equation}
is continuous. The homogeneous space $D_\gamma(\mathbb{R}^N)$ is also a Hilbert space with the inner product\footnote{For the sake of simplicity, we omit any scalar multiplication sign between the two vectors $\nabla_\gamma u$ and $\nabla_\gamma v$.}
\begin{displaymath}
	\braket{u,v}_\gamma = \int_{\mathbb{R}^N}^{} \nabla_\gamma u \nabla_\gamma v\, \, \dz.
\end{displaymath}

Given $q\in [1,\infty)$, with $L^q(\mathbb{R}^N, \abs{g})$ we denote the class of real-valued Lebesgue measurable functions $u$ such that
\[
\int_{\mathbb{R}^N} \abs{g(z)} \abs{u(z)}^q \,\dz<+\infty.
\]
By the assumptions on $g$ and \eqref{eq:2}, we have that the embedding $D_\gamma(\mathbb{R}^N) \hookrightarrow L^q(\mathbb{R}^N, \abs{g})$ is continuous for $q\in [1,2^*_\gamma]$.

The following lemma states a compact embedding of the solution space for \eqref{eq:SP} into the weighted Lebesgue space $L^q(\mathbb{R},\abs{g})$. 
\begin{lemma}
	\label{lemma:1}
	Let $q\in [1,2^*_\gamma)$. The embedding $D_\gamma(\mathbb{R}^N) \hookrightarrow L^q(\mathbb{R}^N, \abs{g})$ is compact.
\end{lemma}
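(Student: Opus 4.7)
The plan is to prove compactness by a standard ``splitting and truncation'' argument combined with the known compact embedding on bounded sets. Let $(u_n)\subset D_\gamma(\mathbb{R}^N)$ be a bounded sequence. By reflexivity and the Hilbert structure, I can extract a subsequence with $u_n \rightharpoonup u$ weakly in $D_\gamma(\mathbb{R}^N)$. Replacing $u_n$ by $u_n-u$, I may assume $u=0$, and the goal becomes showing that $\int_{\mathbb{R}^N} g(z)|u_n(z)|^q\,\mathrm{d}z \to 0$.

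For $R>0$ let $B_R$ denote the Euclidean ball of radius $R$ centered at the origin. I split
\begin{displaymath}
\int_{\mathbb{R}^N} g(z)|u_n|^q\,\mathrm{d}z = \int_{B_R} g(z)|u_n|^q\,\mathrm{d}z + \int_{\mathbb{R}^N\setminus B_R} g(z)|u_n|^q\,\mathrm{d}z,
\end{displaymath}
and estimate the two pieces separately. For the exterior integral I apply Hölder's inequality with the conjugate exponents $2^*_\gamma/q$ and $p' = 2^*_\gamma/(2^*_\gamma-q)$ (both finite since $q<2^*_\gamma$), obtaining
\begin{displaymath}
\int_{\mathbb{R}^N\setminus B_R} g(z)|u_n|^q\,\mathrm{d}z \le \left(\int_{\mathbb{R}^N\setminus B_R} |g|^{p'}\,\mathrm{d}z\right)^{1/p'} \|u_n\|_{L^{2^*_\gamma}(\mathbb{R}^N)}^q.
\end{displaymath}
Since $g\in L^1(\mathbb{R}^N)\cap L^\infty(\mathbb{R}^N)$, interpolation gives $g\in L^{p'}(\mathbb{R}^N)$, and dominated convergence yields $\int_{\mathbb{R}^N\setminus B_R}|g|^{p'}\,\mathrm{d}z \to 0$ as $R\to\infty$. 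Combined with the continuous embedding \eqref{eq:2} and the uniform bound on $\|u_n\|_\gamma$, the exterior term can be made arbitrarily small by choosing $R$ sufficiently large, uniformly in $n$.

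For the interior integral, since $g\in L^\infty(\mathbb{R}^N)$ I bound
\begin{displaymath}
\int_{B_R} g(z)|u_n|^q\,\mathrm{d}z \le \|g\|_{L^\infty(\mathbb{R}^N)} \int_{B_R} |u_n|^q\,\mathrm{d}z,
\end{displaymath}
and invoke the compact Sobolev embedding for the Grushin operator on bounded domains, namely that the restriction map $D_\gamma(\mathbb{R}^N)\to L^q(B_R)$ is compact for every $q\in[1,2^*_\gamma)$ (this is classical and can be found e.g.\ in the framework of Franchi--Lanconelli and the subsequent Grushin literature cited above). This yields $u_n\to 0$ strongly in $L^q(B_R)$, so the interior term vanishes as $n\to\infty$ for each fixed $R$.

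Combining the two estimates via the usual $\varepsilon/2$ argument (fix $R$ to control the exterior tail, then send $n\to\infty$) gives $\int_{\mathbb{R}^N} g(z)|u_n|^q\,\mathrm{d}z \to 0$, which is the desired compactness. The only non-routine ingredient is the compact Grushin embedding on $B_R$; everything else is Hölder plus the absolute continuity of the $L^{p'}$-norm of $g$. I expect the main subtlety to be ensuring that $p'<\infty$ (which forces the assumption $q<2^*_\gamma$ and explains why the critical exponent must be excluded), and checking that the argument does \emph{not} require $g$ to decay pointwise, only integrability.
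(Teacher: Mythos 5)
Your proof is correct, and it reaches the conclusion by a genuinely different route than the paper, even though both arguments hinge on the same key local ingredient (the compact embedding $D_\gamma(\mathbb{R}^N)\hookrightarrow L^q(B_R)$ for $q<2^*_\gamma$, which the paper obtains from the Kogoj--Lanconelli compactness of $\mathscr{H}_\gamma(B_R)\hookrightarrow L^q(B_R)$). Where you split $\int_{\mathbb{R}^N} g|u_n|^q$ into an interior piece on $B_R$ (controlled by $\|g\|_\infty$ and local compactness) and an exterior tail (controlled by H\"older with exponents $2^*_\gamma/q$ and $p'=2^*_\gamma/(2^*_\gamma-q)$ together with the vanishing of $\int_{\mathbb{R}^N\setminus B_R}|g|^{p'}$), the paper never performs this decomposition: it uses local compactness only to extract a.e.\ convergence $u_j\to 0$, notes that $\{|u_j|^q\}$ is bounded in $L^{2^*_\gamma/q}(\mathbb{R}^N)$, deduces $|u_j|^q\rightharpoonup 0$ in that space, and then tests this weak convergence against $g\in L^{(2^*_\gamma/q)'}(\mathbb{R}^N)$. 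Your tail-plus-truncation argument is more elementary (no appeal to the Brezis--Lieb-type identification of the weak limit) and, as you correctly anticipated, makes transparent why $q=2^*_\gamma$ must be excluded ($p'$ would be infinite and the tail of $g$ could not be made small in the relevant norm); the paper's duality argument is shorter on the global step but silently requires a diagonal extraction over $R$ to get a.e.\ convergence on all of $\mathbb{R}^N$, a point your version avoids since the interior term converges for the whole weakly convergent subsequence by the subsequence principle. Both correctly exploit only the integrability of $g$, not any pointwise decay.
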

\begin{proof}
	Let $\{u_j\}$ be a bounded sequence in $D_\gamma(\mathbb{R}^N)$. We can suppose that $u_j\rightharpoonup 0$. For each $R>0$, we have the continuous embedding $D_\gamma(\mathbb{R}^N)\hookrightarrow \mathscr{H}_\gamma(B_R(0))$\footnote{For an open bounded set $\Omega\subset \mathbb{R}^N$, we denote by
		$\mathscr{H}_\gamma(\Omega)$ the completion of $C^1_0(\Omega)$ with respect to the norm \begin{displaymath}
			\norm{u}_\gamma = \left(\int_{\Omega}^{} \abs{\nabla_\gamma u} ^2 \, \dz \right)^{1/2}.
			\end{displaymath}}. Since the embedding $\mathscr{H}_\gamma(B_R(0))\hookrightarrow L^q(B_R(0))$ is compact, see \cite[Proposition 3.2]{kogojlanconelli}, it follows that $D_\gamma(\mathbb{R}^N) \hookrightarrow L^q(B_R(0))$ is a compact embedding as well. Hence,
\begin{displaymath}
	u_j(x) \to 0 ~\hbox{a.e. in $\mathbb{R}^N$}
\end{displaymath}
for some subsequence. As $D_\gamma(\mathbb{R}^N) \hookrightarrow L^{2^*_\gamma}(\mathbb{R}^N)$ is a continuous embedding, we have that $\{\abs{u_j}^q\}$ is a bounded sequence in $L^{2^*_\gamma/q}(\mathbb{R}^N)$. By the Brezis-Lieb Lemma, \cite[Theorem 1]{brezislieb}, up to a subsequence if necessary
\begin{displaymath}
	\abs{u_j}^q\rightharpoonup 0~\hbox{in $L^{2^*_\gamma/q}(\mathbb{R}^N)$},
\end{displaymath}
or equivalently,
\begin{displaymath}
	\int_{\mathbb{R}^N}^{} \abs{u_j}^q \varphi \,\dz \to 0, ~\forall\varphi\in L^p(\mathbb{R}^N),
\end{displaymath}
where $p$ is the H\"{o}lder conjugate of $2^*_\gamma/q$. From the assumptions on $g$ we know that $g\in L^r(\mathbb{R}^N)$ for all $r\ge 1$. It follows that
\begin{displaymath}
		\int_{\mathbb{R}^N}^{} g(z)\abs{u_j}^q \,\dz \to 0.
\end{displaymath}
This shows that $u_j\to 0$ in $L^q(\mathbb{R}^N, \abs{g})$, concluding the proof.
\end{proof}

\section{Existence of solutions for (\ref{eq:SP})}
In the next lemma, depending on the values of $a$, we will establish the mountain pass geometry for the functional $I_a$.

\begin{lemma}
	\label{lemma:2}
	Let $N\ge 3$. Let $f$ satisfies \emph{(\textbf{f1})}, \emph{(\textbf{f2})} and $g\in L^1(\mathbb{R}^N)\cap L^\infty(\mathbb{R}^N)$ be positive. Then the following hold:
	\begin{enumerate}[label=(\roman*)]
		\item There exists $r>0$ such that if $\rho\in (0,r)$ and $\norm{u}_\gamma=\rho$, then there are $\delta=\delta(\rho) >0$ and $a_1= a_1(\rho)>0$ such that if $I_a(u)\ge \delta$ for all $a\in (0,a_1)$. 
		%Moreover, the constants $r,p$ are independent of $a\in (0,a_1)$.
		\item There exists $v\in D_\gamma(\mathbb{R}^N)$ such that $\norm{v}_\gamma>\rho$ and $I_a(v)<0$, for all $a\in (0,a_1)$.
	\end{enumerate} 

\end{lemma}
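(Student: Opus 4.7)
My plan is to verify items (i) and (ii) separately, both of which are by-now-standard mountain-pass geometry arguments, with the extra care of tracking the parameter $a$ explicitly. Throughout I will use the pointwise bounds for $F_a$ collected in Section 2 together with the continuous weighted embeddings $D_\gamma(\mathbb{R}^N) \hookrightarrow L^q(\mathbb{R}^N, \abs{g})$ for $q \in [1, 2^*_\gamma]$ established in Section 3.

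For (i), the strategy is to estimate $K_a$ from above on a small sphere. Combining the bound $F_a(t) \le \varepsilon \abs{t}^2 + C(f,t_1(\varepsilon)) \abs{t}^\alpha + a\abs{t}$ from \eqref{eq:bounds} with $g>0$ and the weighted embeddings yields
\[
K_a(u) \le \varepsilon C_2 \norm{u}_\gamma^2 + C(f,t_1(\varepsilon)) C_\alpha \norm{u}_\gamma^\alpha + a C_1 \norm{u}_\gamma
\]
for constants $C_1, C_2, C_\alpha$ depending only on $g$. Hence, on $\norm{u}_\gamma = \rho$,
\[
I_a(u) \ge \left( \tfrac{1}{2} - \varepsilon C_2 \right) \rho^2 - C(f,t_1(\varepsilon)) C_\alpha \rho^\alpha - a C_1 \rho.
\]
I would first choose $\varepsilon$ so that $\tfrac{1}{2} - \varepsilon C_2 \ge \tfrac{1}{4}$, then pick $r$ so small that $C(f,t_1(\varepsilon)) C_\alpha \rho^{\alpha-2} \le \tfrac{1}{8}$ for $\rho \in (0,r)$ (recall $\alpha > 2$), so that the first two terms exceed $\rho^2/8$. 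Finally, setting $a_1=a_1(\rho) := \rho/(16 C_1)$ forces $a C_1 \rho \le \rho^2/16$, and produces $I_a(u) \ge \rho^2/16 =: \delta$ on the sphere, uniformly in $a \in (0, a_1)$.

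For (ii), I would exploit the super-quadratic lower bound $F(t) \ge M_1 t^\theta - M_2$ with $\theta>2$ derived from the Ambrosetti--Rabinowitz condition in \eqref{eq:bounds_2}. Fix any non-negative $\varphi \in C_0^\infty(\mathbb{R}^N) \setminus \{0\}$ and test along the ray $v = t\varphi$ for $t>0$. Since $\varphi \ge 0$ one has $F_a(t\varphi) = F(t\varphi) - a t \varphi$, and therefore
\[
I_a(t\varphi) \le \frac{t^2}{2} \norm{\varphi}_\gamma^2 - M_1 t^\theta \int_{\mathbb{R}^N} g \varphi^\theta \, \dz + M_2 \norm{g}_1 + a t \int_{\mathbb{R}^N} g \varphi \, \dz.
\]
Since $\int g \varphi^\theta > 0$ and $\theta>2$, the $-M_1 t^\theta$ term dominates; moreover the only $a$-dependence is the linear correction $at\int g\varphi$, which for $a \in (0, a_1)$ is bounded above by $a_1 t \norm{g}_1 \norm{\varphi}_\infty$. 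A single $\bar t$ large enough therefore gives simultaneously $\norm{\bar t \varphi}_\gamma > \rho$ and $I_a(\bar t \varphi) < 0$ for every $a \in (0, a_1)$, so $v := \bar t \varphi$ is the desired element.

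No step really qualifies as the main obstacle: both estimates reduce to the elementary $\rho^2$-vs-$\rho^\alpha$ and $t^2$-vs-$t^\theta$ competitions. The only mild care needed is coupling the constants and the threshold $a_1$ correctly with $\rho$ in (i) and securing uniformity in $a \in (0, a_1)$ in (ii); the latter is automatic because the $a$-dependent correction is linear in $t$, while the dominant negative term grows like $t^\theta$.
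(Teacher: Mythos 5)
Your proposal is correct and follows essentially the same route as the paper: for (i) the same bound \eqref{eq:bounds} plus the weighted embeddings give $I_a(u)\ge(\tfrac12-\varepsilon C)\rho^2-C\rho^\alpha-aC\rho$ on the sphere, and you resolve the $\rho^2$-vs-$\rho^\alpha$-vs-$a\rho$ competition with explicit fractions where the paper uses the first zero of an auxiliary function $A(\rho)$; for (ii) both arguments test $I_a$ along a ray $t\varphi$ with $\varphi\ge0$ and use \eqref{eq:bounds_2} to let the $-M_1t^\theta$ term dominate uniformly in $a\in(0,a_1)$. No gaps.
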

\begin{proof}
\begin{enumerate}[label=(\roman*)]
	\item Using \eqref{eq:bounds}, we estimate
	\begin{align*}
		\int_{\mathbb{R}^N} g(z) F_a(u)\, \dz &\le \int_{\mathbb{R}^N} g(z) \left( \varepsilon \abs{u}^2 + C(f,\varepsilon) \abs{u}^\alpha + a\abs{u}\right)\, \dz \\
		&=\varepsilon \int_{\mathbb{R}^N} g(z) \abs{u} ^2\, \dz + C(f,\varepsilon) \int_{\mathbb{R}^N} g(z) \abs{u}^\alpha \,\dz + a \int_{\mathbb{R}^N} g(z) \abs{u} \, \dz \\
		&\le \varepsilon C_1 \norm{u}_\gamma^2 + C_2 \norm{u}_\gamma^\alpha +aC_3 \norm{u}_\gamma,
	\end{align*}
	where $C_1$, $C_2$ and $C_3$ are the embedding constants of $D_\gamma(\mathbb{R}^N) \hookrightarrow L^q(\mathbb{R}^N,\abs{g})$, see Lemma \ref{lemma:1}.
In particular, for $\norm{u}_\gamma = \rho$,
\begin{align}
	I_a(\rho) &\ge \frac{1}{2}\rho^2 -\varepsilon C_1 \rho^2 - C_2 \rho^\alpha - aC_3 \rho\notag \\
	&= \rho^2 \left(\frac{1}{2} - \varepsilon C_1 - C_2 \rho^{\alpha-2}\right) - aC_3 \rho \label{eq:4}.
	\end{align}
We choose $\varepsilon< (2C_1)^{-1}$. Then we write $I_a(u) \ge A(\rho) - aC_3 \rho$, where $A(\rho) = C\rho^2 \left(1-\tilde{C} \rho^{\alpha-2}\right)$, with $C$, $\tilde{C}$ independent of $a$. Let $r$ be the first non-trivial zero of $A$. For $\rho < r$, we fix $a_1\in \left(0,\frac{A(\rho)}{C_3\rho}\right)$ and $\delta= A(\rho) - a_1 C_3\rho$. Thus, by \eqref{eq:4} we get $I_a(u) \ge \delta$ for every $a\in(0,a_1)$.
 \item Fix a function
 \begin{displaymath}
 	\varphi\in C^\infty_0 (\mathbb{R}^N)\setminus \{0\}, \quad \hbox{with $\varphi\ge 0$~ and $\norm{\varphi}_\gamma =1$.} 
 \end{displaymath}
 Notice that for all $t>0$,
 \begin{align*}
 	I_a(t\varphi) = \frac{1}{2} \int_{\Omega} \abs{\nabla_\gamma t\varphi}^2\,\dz - \int_{\Omega} g(z) F_a(t\varphi) \,\dz \\
 	= \frac{t^2}{2} - \int_{\Omega} g(z) F(t\varphi) \,\dz  + a\int_{\Omega} g(z)t\varphi\, \dz
 \end{align*}
 where $\Omega=\operatorname{supp}\varphi$. By \eqref{eq:bounds_2},
\begin{displaymath}
	I_a(t\varphi)\le \frac{t^2}{2} - M_1t^\theta \int_{\Omega} g(z) \abs{\varphi}^\theta \, \dz + t a \norm{g}_\infty + M_2 \norm{g}_1.
\end{displaymath}
Since $\theta>2$ and $a\in (0,a_1)$, we can fix $t_0>1$ large enough so that $I_a(v) <0$, where $v=t_0\varphi\in D_\gamma(\mathbb{R}^N)$.
\end{enumerate}
\end{proof}
As a further step we prove the Palais-Smale condition of $I_a$.
%\footnote{This means that for every $c\in\mathbb{R}$ and every sequence $\{u_j\}_{j\in\mathbb{N}}$ in $D_\gamma(\mathbb{R}^N)$ such that $I_a(u_j) \to c$ and $I_a'(u_j) \to 0$ as $j \to +\infty$ admits a subsequence which converges strongly in $D_\gamma(\mathbb{R}^N)$.} 
Recall that from the estimate found in \eqref{eq:ARbounds}, for a fixed $a>0$, 
\begin{equation}
	\label{eq:bounds_3}
	\theta F_a(t) \le f_a(t) t + M \quad \hbox{for all $t\in \mathbb{R}$},
\end{equation}
for a suitable positive $M$.
\begin{lemma}
	\label{lemma:3}
	The functional $I_a$ satisfies the Palais-Smale condition for all $a>0$.
\end{lemma}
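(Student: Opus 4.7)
The plan is a standard Palais–Smale argument: take a sequence $\{u_n\}\subset D_\gamma(\mathbb{R}^N)$ with $I_a(u_n)$ bounded and $I_a'(u_n)\to 0$ in the dual, show it is bounded in $D_\gamma(\mathbb{R}^N)$ via the Ambrosetti–Rabinowitz-type inequality \eqref{eq:bounds_3}, extract a weakly convergent subsequence $u_n\rightharpoonup u$, and upgrade this to strong convergence using the compact embedding from Lemma~\ref{lemma:1}.

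For boundedness, I would compute
\begin{displaymath}
\theta I_a(u_n)-\langle I_a'(u_n),u_n\rangle = \left(\frac{\theta}{2}-1\right)\norm{u_n}_\gamma^2-\int_{\mathbb{R}^N} g(z)\bigl(\theta F_a(u_n)-f_a(u_n)u_n\bigr)\dz.
\end{displaymath}
By \eqref{eq:bounds_3} the integrand is bounded above by $Mg(z)$, so the integral is controlled by $M\norm{g}_1$. The left-hand side is bounded by $\theta|I_a(u_n)|+\norm{I_a'(u_n)}_*\norm{u_n}_\gamma$, which grows at most linearly in $\norm{u_n}_\gamma$. Since $\theta>2$, this forces $\norm{u_n}_\gamma$ to be bounded. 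Then by reflexivity of the Hilbert space $D_\gamma(\mathbb{R}^N)$, up to subsequences $u_n\rightharpoonup u$ in $D_\gamma(\mathbb{R}^N)$, and by Lemma~\ref{lemma:1} we get $u_n\to u$ strongly in $L^q(\mathbb{R}^N,|g|)$ for every $q\in[1,2^*_\gamma)$, and $u_n(z)\to u(z)$ a.e.

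For strong convergence in $D_\gamma(\mathbb{R}^N)$, I would write
\begin{displaymath}
\norm{u_n-u}_\gamma^2 = \langle I_a'(u_n)-I_a'(u),u_n-u\rangle+\int_{\mathbb{R}^N} g(z)\bigl(f_a(u_n)-f_a(u)\bigr)(u_n-u)\dz.
\end{displaymath}
The first summand vanishes because $I_a'(u_n)\to 0$, $u_n-u$ is bounded, and $\langle I_a'(u),u_n-u\rangle\to 0$ by weak convergence. For the integral, using the pointwise bound \eqref{eq:bounds} with exponent $\alpha\in(2,2^*_\gamma)$,
\begin{displaymath}
\abs{f_a(u_n)-f_a(u)}\le \varepsilon(\abs{u_n}+\abs{u})+C(f,\varepsilon)(\abs{u_n}^{\alpha-1}+\abs{u}^{\alpha-1})+2a,
\end{displaymath}
and then Hölder's inequality in $L^q(\mathbb{R}^N,|g|)$ with the strong convergence from Lemma~\ref{lemma:1} (applied to the exponents $2$, $\alpha$, and $1$, all of which lie in $[1,2^*_\gamma)$) makes each resulting piece tend to zero. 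Thus $\norm{u_n-u}_\gamma\to 0$.

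The only mild subtlety I expect is the treatment of the additive constant $a$ in the bound for $f_a$: unlike the classical case $f(0)=0$, the integrand does not vanish at infinity on its own, so one genuinely needs $g\in L^1(\mathbb{R}^N)$ (ensuring $1\in L^1(\mathbb{R}^N,|g|)$) together with the compact embedding into $L^1(\mathbb{R}^N,|g|)$ to control the term $a\int g\,|u_n-u|\dz$. Once this point is noted, the passage from weak to strong convergence is routine.
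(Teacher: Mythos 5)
Your proposal is correct and follows essentially the same route as the paper: boundedness of the Palais--Smale sequence via the Ambrosetti--Rabinowitz-type inequality \eqref{eq:bounds_3}, then the compact embedding of Lemma~\ref{lemma:1} combined with the growth bound \eqref{eq:bounds} and H\"{o}lder's inequality to pass from weak to strong convergence. The only cosmetic difference is that you estimate $\abs{f_a(u_n)-f_a(u)}$ and use the pairing $\langle I_a'(u_n)-I_a'(u),u_n-u\rangle$, whereas the paper only bounds $f_a(u_j)$ and splits $\int \nabla_\gamma u_j\nabla_\gamma(u_j-u)$ and $\int \nabla_\gamma u\nabla_\gamma(u_j-u)$ separately; both are equivalent, and your closing remark about needing $g\in L^1(\mathbb{R}^N)$ to handle the constant $a$ is exactly the point the paper's claim \eqref{claim:2} addresses.
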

\begin{proof}
	Let $\{u_j\}_{j}$ be a Palais-Smale sequence in $D_\gamma(\mathbb{R}^N)$. First of all we claim that
	\begin{equation}
		\label{claim:1}
		\text{the sequence $\{u_j\}$ is bounded in $D_\gamma(\mathbb{R}^N)$.}
	\end{equation}
Since $\{u_j\}_j$ is a Palais-Smale sequence, there exists $j_0\in\mathbb{N}$ such that 
		\begin{equation*}
		\left\vert
		\left\langle I'_a (u_j), u_j \right\rangle
		\right\vert \leq \norm{u_j}_\gamma
	\end{equation*}
	for $j\ge j_0$, and there exists $k>0$ such that for any $j\in\mathbb{N}$
	\begin{equation*}
		\abs{I_a(u_j)} \le k.
	\end{equation*}
	So 
	\begin{align}
		\label{eq:6}
		-\norm{u_j}_\gamma- \norm{u_j}_\gamma^2 \le -\int_{\mathbb{R}^N} g(z) f_a(u_j) u_j\,\dz
		\end{align}
	and
		\begin{align}	
		\label{eq:7}
		\frac{1}{2} \norm{u_j}_\gamma^2 - \int_{\mathbb{R}^N} g(z) F_a(u_j) \, \dz \le k
	\end{align}
	for all $j \geq j_0$.
	From \eqref{eq:bounds_3} and \eqref{eq:7},
	\begin{equation}
		\label{eq:8}
		\frac{1}{2} \norm{u_j}_\gamma^2 -\frac{1}{\theta}\int_{\mathbb{R}^N} g(z) f_a(u_j) u_j\, \dz - \frac{1}{\theta} M \norm{g}_1 \le k
	\end{equation}
	for all $j \geq j_0$.
	Thereby, by \eqref{eq:6} and \eqref{eq:8},
	\begin{displaymath}
		\frac{1}{2} \norm{u_j}_\gamma^2 - \frac{1}{\theta} \norm{u_j}_\gamma-\frac{1}{\theta} \norm{u_j}_\gamma^2 \le k +\frac{1}{\theta} M \norm{g}_1,
	\end{displaymath}
	or equivalently,
	\begin{displaymath}
		\left(\frac{1}{2} - \frac{1}{\theta}\right) \norm{u_j}_\gamma^2  -\frac{1}{\theta} \norm{u_j}_\gamma\le k +\frac{1}{\theta} M \norm{g}_1,
	\end{displaymath}
	for $j$ large enough, so \eqref{claim:1} is proved. 

Up to a subsequence, still denoted by $\{u_j\}_j$, there exists $u\in D_\gamma(\mathbb{R}^N)$ such that	
	\begin{displaymath}
	u_j\rightharpoonup u~ \hbox{ in $D_\gamma(\mathbb{R}^N)$}
	\end{displaymath}
and
\begin{displaymath}
	u_j \to u \quad\hbox{a.e. in~ $ \mathbb{R}^N$}.
\end{displaymath}
	From \eqref{eq:bounds} we can estimate
	\begin{displaymath}
		\abs{g(z)f_a(u_j) \left(u_j-u\right)} \le C g(z) \abs{u_j-u} \left(\abs{u_j} + \abs{u_j}^{\alpha-1} + a\right)
	\end{displaymath}
	for some constant $C>0$ independent of $a$.
	Now we we claim that
	\begin{equation}
		\label{claim:2}
		\int_{\mathbb{R}^N}\left( g(z) \abs{u_j - u}\left(\abs{u_j} + \abs{u_j}^{\alpha-1} + a\right)\, \right) \dz \to 0 ~\hbox{as ~$j\to+\infty$}.
	\end{equation}
	In fact, we will only show the limit
\begin{equation}
	\label{eq:9}
			\int_{\mathbb{R}^N}\left( g(z) \abs{u_j - u} \abs{u_j}^{\alpha-1}\, \right) \dz \to 0 ~\hbox{as ~$j\to+\infty$}.
\end{equation}
because the limits involving the other terms follow with the same idea. The sequence $\{\abs{u_j}^{\alpha-1}\}_j$ is bounded in $L^{\frac{2^*_\gamma}{\alpha-1}}(\mathbb{R}^N)$, as \eqref{claim:1} holds. Moreover, applying Lemma \ref{lemma:1} with $q = \frac{2^*_\gamma}{2^*_\gamma - (\alpha -1)}$, we get 
\begin{equation}
	\label{eq:10}
	u_j \to u \in L^{\frac{2^*_\gamma}{2^*_\gamma - (\alpha -1)}} (\mathbb{R}^N,\abs{g}).
\end{equation}
Then, by H\"{o}lder's inequality with conjugate pair $\left(\frac{2^*_\gamma}{\alpha-1}, \frac{2^*_\gamma}{2^*_\gamma - \left(\alpha-1\right)}\right)$,
\begin{displaymath}
		\int_{\mathbb{R}^N} g(z) \abs{u_j - u} \abs{u_j}^{\alpha-1}\,  \dz \le \norm{g\left(u_j-u\right)}_{{2^*_\gamma}/{2^*_\gamma - \left(\alpha-1\right)}} \norm{u_j}_{2^*_\gamma}^{\alpha-1}.
\end{displaymath}
Now, by \eqref{eq:10} we get \eqref{eq:9}.

A consequence of \eqref{claim:2} is the limit
\begin{displaymath}
	\int_{\mathbb{R}^N} g(z) f_a(u_j) (u_j-u) \, \dz \to 0
\end{displaymath}
and so
\begin{equation}
	\label{eq:11}
	\int_{\mathbb{R}^N} \nabla_\gamma u_j \nabla_\gamma \left(u_j - u \right) \, \dz \to 0.
\end{equation}
The weak convergence $u_j\to u$ in $D_\gamma(\mathbb{R}^N)$ yields
\begin{equation}
	\label{eq:12}
		\int_{\mathbb{R}^N} \nabla_\gamma u \nabla_\gamma \left(u_j - u \right) \, \dz \to 0.
\end{equation}
From \eqref{eq:11} and \eqref{eq:12},
\begin{displaymath}
	\int_{\mathbb{R}^N} \abs{\nabla_\gamma u_j - \nabla_\gamma u}^2 \, \dz \to 0,
\end{displaymath}
concluding the proof. 
\end{proof}
Now we are in a position to apply the Mountain Pass Theorem \cite[Theorem 2.1]{ambrosettirabinowitz}, which  ensures the existence and uniform boundedness of solutions to the \eqref{eq:SP} problem.

\begin{thm}
	\label{thm:1}
	Let $a_1>0$ be given in Lemma \ref{lemma:2}. Then for each $a\in (0,a_1)$, \eqref{eq:SP} has a solution $u_a\in D_\gamma(\mathbb{R}^N)$. Moreover, there exists $C>0$ such that $\norm{u}_\gamma \le C$ for all $a\in (0,a_1)$.
\end{thm}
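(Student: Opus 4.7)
The plan is a direct application of the Mountain Pass Theorem, followed by a careful inspection of how the mountain pass level depends on $a$, in order to obtain the uniform bound. Lemmas~\ref{lemma:2} and~\ref{lemma:3} already provide, for every fixed $a\in(0,a_1)$, the mountain pass geometry ($I_a(0)=0$, separation from zero on a sphere, and a point at which $I_a$ is negative) and the Palais--Smale condition. Hence the Ambrosetti--Rabinowitz Mountain Pass Theorem produces a critical point $u_a\in D_\gamma(\mathbb{R}^N)$ of $I_a$ at the level
\[
c_a := \inf_{\gamma\in\Gamma}\,\max_{t\in[0,1]} I_a(\gamma(t)),
\qquad
\Gamma := \{\gamma\in C([0,1], D_\gamma(\mathbb{R}^N)) : \gamma(0)=0,\ \gamma(1)=v\},
\]
and $u_a$ is a weak solution to \eqref{eq:SP} by construction.

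To extract the uniform bound, the first step is to fix an endpoint $v$ that works simultaneously for all $a\in(0,a_1)$. Tracing the inequality in the proof of Lemma~\ref{lemma:2}(ii) with $a\le a_1$ gives
\[
I_a(t\varphi)\le \tfrac{t^2}{2} - M_1 t^\theta \!\int_{\Omega} g\,\abs{\varphi}^\theta \,\dz + t\,a_1 \norm{g}_\infty + M_2\norm{g}_1,
\]
and the right-hand side tends to $-\infty$ as $t\to\infty$ because $\theta>2$. Thus a single $t_0$ can be chosen so that $I_a(t_0\varphi)<0$ for every $a\in(0,a_1)$, and $v:=t_0\varphi$ serves as a common endpoint.

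The second step is to bound $c_a$ uniformly in $a$. Testing the infimum defining $c_a$ against the linear path $t\mapsto tv$ and applying the estimate $F_a(t)\le \varepsilon\abs{t}^2+C(f,\varepsilon)\abs{t}^\alpha+a\abs{t}$ from \eqref{eq:bounds} together with the continuity of the embeddings $D_\gamma(\mathbb{R}^N)\hookrightarrow L^q(\mathbb{R}^N,\abs{g})$ from Lemma~\ref{lemma:1}, one obtains
\[
c_a\le \max_{t\in[0,1]} I_a(tv) \le C_0,
\]
with $C_0$ depending only on $v$, $a_1$, $\norm{g}_1$, $\norm{g}_\infty$ and the embedding constants, but not on $a$.

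Finally, combining $I_a(u_a)-\tfrac{1}{\theta}\langle I_a'(u_a),u_a\rangle = c_a$ (using $\langle I_a'(u_a),u_a\rangle=0$) with the Ambrosetti--Rabinowitz type inequality \eqref{eq:ARbounds} yields
\[
\left(\tfrac{1}{2}-\tfrac{1}{\theta}\right)\norm{u_a}_\gamma^2 \le c_a + \tfrac{Ca+M}{\theta}\norm{g}_1 \le C_0 + \tfrac{Ca_1+M}{\theta}\norm{g}_1,
\]
which gives the uniform bound $\norm{u_a}_\gamma\le C$ since $\theta>2$. The only genuine subtlety in the argument is the uniformity in $a$ of both the endpoint $v$ and the level $c_a$; once these are in place, the rest is the standard Ambrosetti--Rabinowitz scheme.
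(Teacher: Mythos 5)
Your overall route is exactly the paper's: apply the Ambrosetti--Rabinowitz theorem via Lemmas~\ref{lemma:2} and~\ref{lemma:3}, fix a common endpoint $v=t_0\varphi$ valid for all $a\in(0,a_1)$, bound the mountain pass level $c_a$ uniformly by testing against the linear path $\sigma\mapsto \sigma v$, and then combine $I_a(u_a)=c_a$ with $\langle I_a'(u_a),u_a\rangle=0$ and the corrected Ambrosetti--Rabinowitz inequality \eqref{eq:ARbounds} to extract $\norm{u_a}_\gamma\le C$. This is precisely the scheme of the paper's proof, and your final step is stated more cleanly than in the text (the paper's \eqref{eq:17} contains a typo, and it invokes \eqref{eq:bounds_3} where the $a$-uniform version \eqref{eq:ARbounds} is what is actually needed; you use the right one).

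There is, however, one wrong citation in your second step. To show $\max_{t\in[0,1]} I_a(tv)\le C_0$ you need an \emph{upper} bound on $I_a(tv)=\tfrac{t^2}{2}\norm{v}_\gamma^2-\int g\,F_a(tv)\,\dz$, hence a \emph{lower} bound on $F_a$. The estimate you quote from \eqref{eq:bounds}, namely $F_a(t)\le \varepsilon\abs{t}^2+C(f,\varepsilon)\abs{t}^\alpha+a\abs{t}$, goes in the opposite direction: it yields a lower bound on $I_a$ and proves nothing about $\max_t I_a(tv)$. The correct ingredient is \eqref{eq:bounds_2}, $F(t)\ge M_1 t^\theta - M_2$, which gives $F_a(s)\ge -M_2 - a\abs{s}$ for $s\ge 0$ and hence
\begin{displaymath}
I_a(tv)\le \frac{t^2}{2}\norm{v}_\gamma^2 + M_2\norm{g}_1 + a_1 t\, C\norm{v}_\gamma \qquad (t\in[0,1]),
\end{displaymath}
exactly as in the paper's estimate \eqref{eq:15}. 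With this substitution your argument is complete and coincides with the paper's.
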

\begin{proof}
 Consider $a_1$, $\delta$ and $v$ as given in Lemma \ref{lemma:2}. For $a\in (0,a_1)$, using Lemma \ref{lemma:2} and \ref{lemma:3}, the hypotheses of the mountain pass theorem \cite[Theorem 2.1]{ambrosettirabinowitz} are verified. So there exists a non-trivial critical point $u_a\in D_\gamma(\mathbb{R}^N)$ of $I_a$ satisfying
 \begin{equation}
 	\label{eq:13}
 	I_a(u_a) = c_a =\inf_{\gamma\in\Gamma_v} 
 	 \max_{t\in [0,1]} I_a(\gamma(t))\ge\delta ~\hbox{and $I'_a(u_a) =0$},
 \end{equation} 
 where $\Gamma_v = \{\gamma\in C\left([0,1], D_\gamma(\mathbb{R}^N)\right)\mid \gamma(0) = 0, \gamma(1) = v\}$. Thus, $u_a$ is a non-trivial solution of \eqref{eq:SP}. To prove the uniform boundedness of $u_a$ in $D_\gamma(\mathbb{R}^N)$, we first show that the set $\{I_a(u_a)\mid a\in (0,a_1)\}$ is uniformly bounded. Considering $t_1$ and $\varphi$ as in Lemma \ref{lemma:2}-(ii), Let us introduce a path $\tilde{\gamma} \colon [0,1] \to D_\gamma(\mathbb{R}^N)$ defined as $\tilde{\gamma}(\sigma) = \sigma v$,
 where $v = t\varphi$ for some $t>t_1$.  From \eqref{eq:13} we have that
 \begin{equation}
 	\label{eq:14}
 	I_a(u_a) \le \max_{\sigma\in [0,1]} I_a(\tilde{\gamma} (\sigma)) = \max_{\sigma\in [0,1]} I_a (\sigma t\varphi).
 \end{equation}
 Now 
 \begin{align}
 	I_a(\sigma t\varphi) &\le \frac{\sigma^2 t^2}{2} \norm{\varphi}_\gamma^2 - M_1 \sigma^\theta t^\theta \int_{\mathbb{R}^N} g(z) \left(\varphi(z)\right)^\theta\, \dz + M_2 \int_{\mathbb{R}^N} g(z) \, \dz + a\sigma t \int_{\mathbb{R}^N} g(z) \varphi(z) \, \dz \notag \\
 	\label{eq:15}
 	&\le \frac{t^2}{2} + M_2 \norm{g}_1 + a_1 t C_1\norm{\varphi}_\gamma.
 \end{align}
 From \eqref{eq:14} and \eqref{eq:15}, there exists $C=C(N_\gamma, M_2,g,a_1)$ such that
 \begin{equation}
 	\label{eq:16}
 	I_a(u_a) \le C, ~\hbox{for all $a\in (0,a_1)$.}
 \end{equation}
 Now we are going to prove that the solutions $u_a$ are uniformly bounded in $D_\gamma(\mathbb{R}^N)$ with respect to $a \in (0,a_1)$. From \eqref{eq:14} we know that
 \begin{equation}
 	\label{eq:17}
 	\norm{u_a}_\gamma^2 - \int_{\mathbb{R}^N} g(z) f_a(z) u_a \, \dz = 0.
 \end{equation}
Also, by \eqref{eq:16} we have
\begin{equation}
	\label{eq:18}
	\frac{1}{2}\norm{u_a}_\gamma ^2 - \int_{\mathbb{R}^N} g(z) F_a (u_a) \, \dz \le C.
\end{equation}
Now first multiplying \eqref{eq:17} by $\frac{1}{\theta}$ and then subtracting into \eqref{eq:18} gives the following
\begin{displaymath}
	\left(\frac{1}{2} - \frac{1}{\theta}\right) \norm{u_a}_\gamma^2 + \int_{\mathbb{R}^N} g(z)\left(\frac{1}{\theta}f_a(u_a) u_a - F_a(u_a)\right) \, \dz \le C,
\end{displaymath}
and combining the above with \eqref{eq:bounds_3} yields
\begin{displaymath}
	\left(\frac{1}{2} - \frac{1}{\theta}\right) \norm{u_a}_\gamma^2 - \frac{1}{\theta} M_3 \norm{g}_1 \le C.
\end{displaymath}
  So, there exists $C$ such that $\norm{u_a}_\gamma \le C$ for every $a\in (0,a_1)$.
  \end{proof} 
  \section{Properties of the solutions}
Our next result establishes that the solution $u_a$ belongs to $L^\infty(\mathbb{R}^N)$ if the parameter $a$ is sufficiently small. Using the iterative argument of \cite[Lemma B.3]{struwe} we prove that the solution $u_a\in L^q(\mathbb{R}^N)$, for every $q\in [2^*_\gamma,\infty)$, provided $a$ is small enough. Moreover the argument in \cite[Theorem 3.1]{loiudice2009} can be applied to establish the boundedness of $u_a$.
\begin{lemma}
\label{lemma:4}
	There exists $a_2\in (0,a_1)$ such that $u_a\in L^\infty(\mathbb{R}^N)\cap C(\mathbb{R}^N)$ for all $a\in (0,a_2)$.
\end{lemma}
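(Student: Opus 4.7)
The plan is the two-step strategy signalled in the statement: a Moser-type iteration establishing $u_a \in L^q(\mathbb{R}^N)$ for every $q \in [2^*_\gamma, +\infty)$, followed by an application of the Grushin-version De~Giorgi--Moser--Stampacchia argument from \cite{loiudice2009} to upgrade high integrability to $L^\infty$, and finally standard H\"older regularity for $X$-elliptic equations to obtain continuity.

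For the iterative step I would test the weak formulation of \eqref{eq:SP} with
\begin{equation*}
    \varphi = u_a \min\!\bigl(\abs{u_a}^{2(\beta-1)},\, L^{2(\beta-1)}\bigr), \qquad \beta \ge 1,\ L > 0,
\end{equation*}
which is admissible in $D_\gamma(\mathbb{R}^N)$. Using the identity $\abs{u_a}^{2(\beta-1)} \abs{\nabla_\gamma u_a}^2 = \beta^{-2} \abs{\nabla_\gamma(\abs{u_a}^\beta)}^2$ on the set where no truncation occurs, together with the Grushin--Sobolev inequality from \cite{loiudice2006}, the left-hand side is bounded below by $C \beta^{-2} \norm{\abs{u_a}^\beta}_{2^*_\gamma}^2$ after sending $L \to \infty$ by monotone convergence. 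The right-hand side is controlled via \eqref{eq:bounds}, which decomposes into contributions from $\abs{u_a}$, $\abs{u_a}^{\alpha-1}$, and the constant $a$; H\"older's inequality applied to the subcritical term isolates a factor $\norm{u_a}_{2^*_\gamma}^{\alpha-2}$ that is uniformly bounded for $a \in (0, a_1)$ by Theorem \ref{thm:1}. Iterating along the geometric scale $\beta_k = (2^*_\gamma/2)^k$ then yields $u_a \in L^q(\mathbb{R}^N)$ for every finite $q \ge 2^*_\gamma$, with a quantitative estimate on $\norm{u_a}_q$.

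Once high integrability is in place, I would invoke \cite[Theorem 3.1]{loiudice2009}, a local Moser iteration on Grushin balls that requires only $L^p_{\mathrm{loc}}$ data for $p$ sufficiently large. Writing \eqref{eq:SP} as $-\Delta_\gamma u_a = V$ with $V(z) = g(z) f_a(u_a(z))$ and using \eqref{eq:bounds} together with $g \in L^\infty$ and the integrability just proved shows $V \in L^p_{\mathrm{loc}}$ for every $p$, whence $u_a \in L^\infty_{\mathrm{loc}}(\mathbb{R}^N)$. The global $L^\infty$ bound then follows from the decay of $u_a$ at infinity, a consequence of $u_a \in L^{2^*_\gamma}(\mathbb{R}^N)$ combined with the local boundedness estimate applied to Grushin balls far from the origin. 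Continuity of $u_a$ is a direct application of the H\"older regularity result for weak solutions of $X$-elliptic equations with bounded right-hand side recalled in the remark following Theorem \ref{thm_main}.

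The main obstacle is the genuinely non-homogeneous source $a$ appearing in \eqref{eq:bounds}: unlike the two homogeneous terms $\abs{u_a}$ and $\abs{u_a}^{\alpha-1}$, it does not improve its integrability along the iteration and would spoil the closing of the scheme if unrestricted. This is precisely where the smallness condition $a < a_2 \le a_1$ is used: for $a$ small enough, the cumulative contribution of the $a$-terms—multiplied by $g \in L^1(\mathbb{R}^N) \cap L^\infty(\mathbb{R}^N)$—remains summable along the iteration, and every step produces a finite bound on $\norm{u_a}_q$.
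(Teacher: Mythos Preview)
Your three-step architecture---Moser iteration for arbitrary $L^q$, then the $L^\infty$ argument of \cite{loiudice2009}, then H\"older continuity for $X$-elliptic equations---coincides with the paper's scheme. But your identification of the obstacle is wrong, and you miss the device that actually drives the paper's proof.

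The constant term $a$ in \eqref{eq:bounds} is harmless at every step of the iteration. Testing with your $\varphi$ produces $a\int_{\mathbb{R}^N} g\,|u_a|^{2\beta-1}\,\dz$; since $g\in L^p(\mathbb{R}^N)$ for every $p\ge 1$ and the exponent $2\beta-1$ is strictly below the exponent $2\beta$ already controlled at the previous step, this quantity is finite and lower order regardless of the size of $a$. Your final paragraph attributing the threshold $a_2$ to summability of these contributions is therefore a misdiagnosis: no smallness of $a$ enters the iteration. The term that actually requires work is the subcritical one $C_\varepsilon\int g\,|u_a|^{2\beta+\alpha-2}\,\dz$; your H\"older step leaves a norm $\|u_a\|_r^{2\beta}$ with $r$ strictly between $2\beta$ and $2^*_\gamma\beta$, and closing the scheme requires an interpolation/absorption argument (of Brezis--Kato type) that you do not supply.

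The paper proceeds differently. It reduces to a sequence $a_j\to 0$ and uses the uniform bound of Theorem~\ref{thm:1} together with the compactness from Lemma~\ref{lemma:3} to obtain \emph{strong} convergence $u_j\to u$ in $D_\gamma(\mathbb{R}^N)$, hence a dominating function $h\in L^{2^*_\gamma}$ common to all $u_j$. This yields a \emph{single} potential $V\in L^{N_\gamma/2}$, independent of $j$, with $|g\,f_{a_j}(u_j)|\le C\,V(z)(1+|u_j|)$, and the Brezis--Kato splitting $\{V\ge K\}$ versus $\{V<K\}$ then absorbs the critical contribution uniformly in $j$. The parameter $a_2$ is an artefact of this subsequence reduction, not of any smallness requirement in the estimates. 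The payoff---and the reason the paper takes this route---is that the argument simultaneously delivers $L^q$ and $L^\infty$ bounds that are \emph{uniform} along $a_j\to 0$, which is precisely what Propositions~\ref{prop:1} and~\ref{prop:uniform} rely on; a per-$a$ iteration, even once repaired, would not provide this.
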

  \begin{proof}
  	In order to prove the lemma, it is enough to show that for any sequence $a_j\to 0$, the sequence of solutions $u_j = u_{a_j}$ possesses a subsequence, still denoted by itself, which is bounded in $L^\infty(\mathbb{R}^N)$. By Lemma \ref{thm:1}, the sequence $\{u_j\}$ is bounded in $D_\gamma(\mathbb{R}^N)$, then for some subsequence, there is $u\in D_\gamma(\mathbb{R}^N)$ such that
  	\begin{displaymath}
  		u_j \rightharpoonup u \quad\hbox{in $D_\gamma(\mathbb{R}^N)$}
  	\end{displaymath}
  	and
  	\begin{displaymath}
  		u_j \to u \quad\hbox{a.e. in $\mathbb{R}^N$}.
  	\end{displaymath}
  	With the same approach explored in the proof of Lemma \ref{lemma:3}, we have that 
  	\begin{displaymath}
  		u_j\to u \quad\hbox{in $D_\gamma(\mathbb{R}^N)$}.
  	\end{displaymath} 
  	Consequently,
  	\begin{equation}\
  		\label{eq:19}
  		u_j\to u \quad\hbox{in $L^{2^*_\gamma}(\mathbb{R}^N)$}
  	\end{equation}
  	and for some subsequence, there is $h\in L^{2^*_\gamma}(\mathbb{R}^N)$ such that
  	\begin{displaymath}
  		\abs{u_j(z)}\le h(z), \quad \hbox{a.e. in $\mathbb{R}^N$ and $\forall j\in\mathbb{N}$}.
  	\end{displaymath}
  	Setting the function
  	\[
  	V_j(z) = g(z) \frac{\left(1+\abs{u_j}^{\frac{N_\gamma +2}{N_\gamma -2}}\right)}{1+\abs{u_j}},
  	\]
  	it follows that
  	\begin{equation}
  		\label{eq:20}
  		\abs{V_j}\le g(1+\abs{u_j}^{4/{N_\gamma-2}}) \le g (1+\abs{h}^{4/{N_\gamma -2}}), \quad \forall j\in\mathbb{N}.
  	\end{equation}
  	By the assumption on the function $g$, $V_j\in L^p(\mathbb{R}^N)$, for every $p\in [1,N_\gamma/2]$. Thereby, the Dominated Convergence Theorem with \eqref{eq:19} and \eqref{eq:20} implies that $V_j\to V$ in $L^{p}(\mathbb{R}^N)$, for some $V\in L^{p}(\mathbb{R}^N)$, $p\in [1,2^*_\gamma]$. So there exist a constant $C>0$ and $j_0\in \mathbb{N}$ such that

\begin{displaymath}
	\abs{g(z) f_{a_j}(u_j)} \le C V(z) \left(1+\abs{u_j}\right), \quad \forall j\ge j_0.
\end{displaymath}
In the rest of the proof we will assume $C=1$ to save notation.

\medskip

\textbf{Step 1.} Given $q\in [2^*_\gamma,\infty)$, there exists a constant $K_q$ such that $\norm{u_j}_q \le K_q$ for all $j\ge j_0$.

Let us prove that if $u_j\in L^q(\mathbb{R}^N)$ for some $q\ge 2^*_\gamma$, then $u_j\in L^{\frac{2^*_\gamma}{2} q}$.  Let $L\ge 0$ and $G(u_j) = G_L(u_j) = u_j \min\{{u_j^ {(q/2)-1}, L}\}$ and define 
\[
F(u_j) = F_L(u_j) = \int_{0}^{u_j} \abs{G'(t)}^2 \,\dt \in D_\gamma(\mathbb{R}^N).
\]
First of all, observe that
\begin{equation}
	\label{eq:21}
	\int_{\mathbb{R}^N} \nabla_\gamma F(u_j) \nabla_\gamma u_j \,\dz = \int_{\mathbb{R}^N} \abs{\nabla_\gamma G(u_j)}^2 \,\dz ~\hbox{for every $u_j\in D_\gamma(\mathbb{R}^N)$}.
\end{equation}
and the following growth condition holds
\begin{displaymath}
	u_j F(u_j) \le C_q G(u_j)^2 \le C_q u_j^q,
\end{displaymath}
with a constant $C_q$ that can be explicitly given by $C_q = \frac{q^2}{4(q-1)}$.

 Using $F(u_j)$ as a test function in \eqref{eq:SP} we get
 \begin{multline}
 	\label{eq:22}
 	\int_{\mathbb{R}^N} \nabla_\gamma F(u_j) \nabla_\gamma u_j \, \dz \le \int_{\mathbb{R}^N} V(z) (1+\abs{u_j}) u_j\min \{u_j^{q-2}, L^2 \}\,\dz \\ \le \int_{\mathbb{R}^N} V(z) (1+2\abs{u_j}^2) \min \{u_j^{q-2}, L^2 \}  \,\dz  \\
 	\le \int_{\mathbb{R}^N} V(z)\, \dz + 3 \int_{\mathbb{R}^N} V(z) \abs{u_j}^2 \min \{u_j^{q-2}, L^2 \} \,\dz.
 \end{multline}
 Moreover, if $q\in L^q(\mathbb{R}^N)$, then for any $K\ge 1$  by \eqref{eq:21} and \eqref{eq:22} there holds
 \begin{multline*}
 	\int_{\mathbb{R}^N} \abs{\nabla_\gamma G(u_j)}^2 \,\dz \le c + 3\int_{\mathbb{R}^N} V(z) \abs{u_j}^2 \min \{u_j^{q-2}, L^2 \}\,\dz \\\le c+ 3 K\int_{\mathbb{R}^N} \abs{u_j}^2 \min \{u_j^{q-2}, L^2 \} \,\dz + 3 \int_{\{z\in\mathbb{R}^N\mid V(z) \ge K\}} \abs{u_j}^2 \min \{u_j^{q-2}, L^2 \}\,\dz\\\le
 	c (1+K) + c\left(\int_{\{z\in\mathbb{R}^N\mid V(z) \ge K\}} V(z)^{N_\gamma/2} \,\dz \right)^{N_\gamma/2} \times \left(\int_{\mathbb{R}^N} \abs{G(u_j)} ^{\frac{2N_\gamma}{N_\gamma -2}} \,\dz\right)^{\frac{N_\gamma - 2}{N_\gamma}}\\ \le
 	c(1+K) + c \varepsilon(K) \int_{\mathbb{R}^N} \abs{\nabla_\gamma G(u_j)}^2\,\dz,
 \end{multline*}
 with some constants $c$ depending on the $L^q(\mathbb{R}^N)$ norm of $u_j$ and
 \[
 \varepsilon(K) = \left(\int_{\{z\in\mathbb{R}^N\mid V(z) \ge K\}} V(z)^{N_\gamma/2}\right)^{2/N_\gamma} \to 0 \quad \hbox{as $K\to \infty$}.
 \]
 Fix $K$ such that $\varepsilon(K) \le 1/2$, for this choice of $K$ we may conclude that
 \[
 \int_{\mathbb{R}^N}\abs{\nabla_\gamma G(u_j)}^2\, \dz 
 \]
 remains uniformly bounded in $L$. Hence we may let $L\to \infty$ to derive that $u_j\in L^{\frac{2^*_\gamma}{2} q}(\mathbb{R}^N)$. We can conclude our argument by iterating the process.
 
 \medskip
 
 \textbf{Step 2.} There exists $C>0$ such that $\norm{u_j}_\infty\le C$ for all $j\ge j_0$.
  
Since, by the previous Step, $u_j\in L^q(\mathbb{R}^N)$ for all $q\in [1,\infty)$ then also $V\in L^q(\mathbb{R}^N)$ for every $q\in [1,\infty)$. Let $t_0>N_\gamma/2$, 
we  construct a sequence of $L^{q_k}(\mathbb{R}^N)$ norms of $u_j$, with $q_k\to\infty$, which are uniformly bounded by the $L^{q_0}(\mathbb{R}^N)$ norm of $u_j$, where $q_0 = t_0'2^*_\gamma$ and $t_0'$ is such that $1/t_0+ 1/t_0' = 1$.  
As before, using again the test function $F(u_j)$, we estimate the right-hand side of \eqref{eq:SP} as follows
   \begin{multline}
   	\label{eq:23}
 	\int_{\mathbb{R}^N} V(z) (1+\abs{u_j}) F(u_j)\,\dz \le \norm{V}_{t_0} \norm{u_j F(u_j)}_{t_0'}  + C_q\norm{V}_{\tilde{t}_0}\norm{u_j}_{qt_0'}^{q-1}\\
 	\le C_q \norm{u_j}_{q t_0'}^q \left(\norm{V}_{t_0} + \norm{V}_{\tilde{t}_0} \norm{u_j}_{qt_0'}^{-1}\right) = \tilde{C} C_q \norm{V}_{t_0} \norm{u_j}_{qt_0'}^q
  \end{multline} 
 for some $\tilde{t}_0>N_\gamma/2$ and the constant $\tilde{C} = \tilde{C}(q, V, u_j)$. Hence, by \eqref{eq:21} and \eqref{eq:23}, we obtain
 \begin{displaymath}
 	\int_{\mathbb{R}^N} \abs{\nabla_\gamma G(u)}^2\,\dz \le \tilde{C} C_q \norm{V}_{t_0}\norm{u_j}_{qt_0'}^q.
 \end{displaymath}
 By Sobolev inequality and letting $L\to\infty$, we get
 \begin{equation*}
 	%\label{eq:24}
 	\norm{u_j}_{\frac{2^*_\gamma}{2}q}^q \le C C_q \norm{V}_{t_0} \norm{u_j}_{qt_0'}^q.
 \end{equation*}
 Let $\delta = \frac{2^*_\gamma}{2t_0'}>1$. With this notation, the preceding estimate can be rewritten as
  \[
 \norm{u_j}_{\delta q t_0'} \le \left(C C_q\right)^{1/q} \norm{V}_{t_0}^{1/q}\norm{u_j}_{qt_0'}.
 \]
 Now, define $q_0  = 2^*_\gamma t_0'$ and $q_k = \delta^k q_0$ for $k\ge 1$. Observing that $C_q\le C q$, we obtain
 \[
 \norm{u_j}_{q^k} \le \left( \prod_{i=0}^{k-1} \left(C q_i\right) ^{1/q_i}\right) \norm{V}_{t_0}^{\sum_{i=0}^{k-1}\frac{1}{q_i}} \norm{u_j}_{q_0}
 \]
 whose right-hand side is finite since
 \begin{displaymath}
 	\sum_{i=0}^{\infty} \frac{1}{q_i} = \frac{1}{q_0} \sum_{i=0}^{\infty} \frac{1}{\delta_i} <\infty \quad \text{and} \quad \sum_{i=0}^{\infty} \frac{\log q_i}{q_i}<\infty.
 \end{displaymath}
 Therefore, letting $k\to\infty$, we can conclude.

 \textbf{Step 3}. 
	The continuity of the solutions $u_j$ comes from the non-homogeneous Harnack inequality for $X$-elliptic operators established in \cite[Theorem 5.5]{gutierrezlanconelli} 
	combined with some standard arguments, see for example \cite[Corollary 4.18]{han} for further details.
\end{proof}
  
In the next two lemmas, we prove that the solution $\{u_a\}$ is uniformly bounded from below over several spaces.
\begin{lemma}
	Let $a_1$ as in Theorem \ref{thm:1}. Then there exists $C_1>0$ such that $\norm{u_a}_\gamma \ge C_1$ for all $a\in(0,a_1)$.
\end{lemma}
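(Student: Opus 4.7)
The plan is to combine the mountain pass lower bound $I_a(u_a)=c_a\ge \delta$, with $\delta>0$ independent of $a\in(0,a_1)$ (by Lemma~\ref{lemma:2} and \eqref{eq:13}), with a suitable upper estimate of $I_a(u_a)$ in terms of $\norm{u_a}_\gamma$ alone. Since $\delta$ is a fixed positive constant, any such upper estimate that vanishes with $\norm{u_a}_\gamma$ will prevent $\norm{u_a}_\gamma$ from approaching zero uniformly in $a\in (0,a_1)$.

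First I would establish the pointwise lower bound
\begin{displaymath}
F_a(t)\ge -\varepsilon t^2 - C(f,\varepsilon)\abs{t}^\alpha - a\abs{t}\qquad\text{for every } t\in\mathbb{R}.
\end{displaymath}
For $t\ge 0$ this follows from the estimate $\abs{F(t)}\le \varepsilon t^2+C\abs{t}^\alpha$ used in deriving \eqref{eq:bounds}, applied to $F_a(t)=F(t)-at$. For $t\in(-1,0)$ the explicit definition yields $F_a(t)=a(\abs{t}-t^2/2)\ge 0$, and for $t\le -1$ we have $F_a(t)=a/2\ge 0$. Integrating against $g$, applying the continuous embeddings $D_\gamma(\mathbb{R}^N)\hookrightarrow L^q(\mathbb{R}^N,\abs{g})$ of Lemma~\ref{lemma:1} with $q\in\{1,2,\alpha\}$, and absorbing the $\varepsilon$-contribution into the leading $\frac{1}{2}\norm{u_a}_\gamma^2$, I obtain
\begin{displaymath}
\delta\le I_a(u_a)=\frac{1}{2}\norm{u_a}_\gamma^2-\int_{\mathbb{R}^N}g(z)F_a(u_a)\,\dz \le K_1\norm{u_a}_\gamma^2+K_2\norm{u_a}_\gamma^\alpha+aK_3\norm{u_a}_\gamma.
\end{displaymath}

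To finish, I would argue by cases on the size of $\norm{u_a}_\gamma$. If $\norm{u_a}_\gamma\ge 1$ there is nothing to prove. If instead $\norm{u_a}_\gamma\le 1$, then both $\norm{u_a}_\gamma^2$ and $\norm{u_a}_\gamma^\alpha$ are bounded above by $\norm{u_a}_\gamma$ (since $\alpha>2$), and the previous inequality becomes $\delta\le (K_1+K_2+a_1 K_3)\norm{u_a}_\gamma$. Hence $\norm{u_a}_\gamma\ge \min\bigl\{1,\delta/(K_1+K_2+a_1 K_3)\bigr\}>0$ uniformly in $a\in(0,a_1)$. The main subtlety, and the one step that deserves care, is the pointwise analysis of $F_a$ on $\{u_a<0\}$: without the observation that $F_a\ge 0$ there, an unwanted additive constant of order $a\norm{g}_1$ would appear on the right-hand side and one would be forced to shrink $a_1$ further in order to close the estimate.
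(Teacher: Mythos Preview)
Your argument is correct and follows the same overall strategy as the paper: bound $I_a(u_a)$ from above by a function of $\norm{u_a}_\gamma$ that vanishes at the origin, and combine with the mountain-pass lower bound $I_a(u_a)\ge\delta$ from \eqref{eq:13}. The difference is only in the pointwise estimate for $F_a$. The paper uses the sharper and simpler observation $F_a(t)\ge -a\abs{t}$ for every $t\in\mathbb{R}$ (for $t\ge 0$ this is just $F(t)\ge 0$, and for $t<0$ it is the same computation you carried out). This yields directly
\[
\delta\le I_a(u_a)\le \tfrac{1}{2}\norm{u_a}_\gamma^2 + a_1 C\norm{u_a}_\gamma,
\]
a quadratic inequality in $\norm{u_a}_\gamma$ with no $\norm{u_a}_\gamma^\alpha$ term and hence no need for the case split $\norm{u_a}_\gamma\lessgtr 1$. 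Your route via $\abs{F(t)}\le \varepsilon t^2+C\abs{t}^\alpha$ is correct but unnecessarily weak here; the extra $K_1$ and $K_2$ terms and the closing dichotomy can be avoided entirely once you notice that $F\ge 0$ on $[0,\infty)$ already suffices.
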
  
\begin{proof}
	We notice that $F_a(t)\ge -a\abs{t}$ for all $t\in\mathbb{R}$. For $\delta$ as given in Theorem \ref{thm:1} and from \eqref{eq:13} we write $I_a(u_a)\ge\delta$, for all $a\in(0,a_1)$. Using the embedding of $D_\gamma(\mathbb{R}^N)$ proved in Lemma \ref{lemma:1} we have
	\begin{equation*}
		\delta \le I_a(u_a) \le \frac{1}{2} \norm{u_a}_\gamma^2 + a\int_{\mathbb{R}^N} g(z) \abs{u_a(z)} \, \dz \le \frac{1}{2}\norm{u_a}_\gamma^2 + a_1 C\norm{u_a}_\gamma.
	\end{equation*}
	So there exists $C_1 = C_1(N,\gamma,g,a_1,\delta)$ such that $\norm{u_a}_\gamma \le C_1$ for all $a\in (0,a_1)$.
\end{proof}
  In what follows, we show an estimate from below to the norm $L^\infty(\mathbb{R}^N)$ of $u_a$ for $a$ small enough.
  \begin{lemma}
  	\label{lemma:5}
  	There exists $a_3\in (0,a_2)$ and $\beta>0$ that does not depend on $a\in (0,a_3)$, such that $\norm{u_a}_\infty\ge\beta$ for all $a\in(0,a_3)$.
  \end{lemma}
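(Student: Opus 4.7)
The plan is to argue by contradiction. If no such $a_3,\beta$ existed, then taking the pair $(a_3,\beta)=(1/n,1/n)$ (for $n$ so large that $1/n<a_2$), we could select $a_n\in (0,1/n)$ with $\|u_{a_n}\|_\infty<1/n$. Writing $u_n:=u_{a_n}$, we obtain a sequence with $a_n\to 0^+$ and $\|u_n\|_\infty\to 0$. The strategy is to show that these two facts together force $\|u_n\|_\gamma\to 0$, contradicting the uniform lower bound $\|u_n\|_\gamma\ge C_1>0$ furnished by the preceding lemma.

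The key step is to test the Euler--Lagrange identity $\langle I'_{a_n}(u_n),u_n\rangle=0$ against $u_n$ itself, which gives
\begin{displaymath}
\|u_n\|_\gamma^2 = \int_{\mathbb{R}^N} g(z)\,f_{a_n}(u_n)\,u_n \,\dz.
\end{displaymath}
I now split the right-hand side according to the sign of $u_n$. Fix $\varepsilon>0$ (to be chosen at the end) and let $t_1(\varepsilon)$ be as in \textbf{(f}$_1$\textbf{)}, so that $f(t)\le \varepsilon t$ on $[0,t_1(\varepsilon)]$. Since $\|u_n\|_\infty\to 0$, for $n$ large enough $\|u_n\|_\infty<t_1(\varepsilon)$; then on $\{u_n\ge 0\}$ we have $f_{a_n}(u_n)u_n=(f(u_n)-a_n)u_n\le \varepsilon u_n^2$, the term $-a_n u_n$ being nonpositive. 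On $\{-1<u_n<0\}$, the explicit form $f_{a_n}(t)=-a_n(t+1)$ yields $|f_{a_n}(u_n)u_n|=a_n(u_n+1)|u_n|\le a_n|u_n|$, while on $\{u_n\le -1\}$ the integrand vanishes. Summing these,
\begin{displaymath}
\|u_n\|_\gamma^2 \le \varepsilon \int_{\mathbb{R}^N} g\,u_n^2\,\dz + a_n \int_{\mathbb{R}^N} g\,|u_n|\,\dz.
\end{displaymath}

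To close the estimate, I invoke the continuous embedding $D_\gamma(\mathbb{R}^N)\hookrightarrow L^q(\mathbb{R}^N,|g|)$ for $q\in\{1,2\}$, noted before Lemma \ref{lemma:1}, which provides constants $C_2,C_3>0$ with $\int g\,u_n^2\le C_2\|u_n\|_\gamma^2$ and $\int g\,|u_n|\le C_3\|u_n\|_\gamma$. Fixing $\varepsilon$ so small that $\varepsilon C_2<1/2$ and absorbing the first term on the left yields $\|u_n\|_\gamma\le 2C_3\,a_n$; letting $n\to\infty$ gives $\|u_n\|_\gamma\to 0$, the desired contradiction.

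The one subtlety worth flagging is the transition zone $-1<u_n<0$, where the hypothesis \textbf{(f}$_1$\textbf{)} says nothing about $f_{a_n}$; however, the explicit linear formula for $f_{a_n}$ there makes its contribution manifestly $O(a_n)$, which is readily absorbed. The rest is a standard ``small-$u$ kills $f(u)/u$'' absorption argument played against the uniform mountain-pass energy floor $C_1$ from the preceding lemma.
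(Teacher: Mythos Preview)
Your proof is correct and follows essentially the same mechanism as the paper's: both test the equation against $u_a$ to obtain $\norm{u_a}_\gamma^2=\int g\,f_a(u_a)u_a$, bound the right-hand side in terms of $\norm{u_a}_\infty$ and $a$, and play this against the uniform lower bound on $\norm{u_a}_\gamma$ coming from the mountain-pass level. The only cosmetic difference is that the paper argues directly---bounding $\abs{f_a(u_a)u_a}$ pointwise by $(C(\norm{u_a}_\infty^{\alpha-1}+\norm{u_a}_\infty)+a)\abs{u_a}$ and using $\norm{g}_1$---whereas you argue by contradiction and exploit the $\varepsilon$-smallness of $f(t)/t$ near zero together with the weighted $L^2$-embedding to absorb; the underlying idea is the same.
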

  \begin{proof}
    	For the constant~$\delta$ given in Theorem \ref{thm:1}, from \eqref{eq:13} we write $I_a(u_a)\ge \delta$ for all $a\in (0,a_1)$.
  	Further, we have (notice again that $F_a(t) \ge -a\abs{t}$ for all $t\in\mathbb{R}$),
  	\begin{equation*}
  		\frac{\norm{u_a}_\gamma^2}{2} = I_a(u_a) + \int_{\mathbb{R}^N} g(z) F_a(u_a) \, \dz \ge \delta - a\int_{\mathbb{R}^N} g(z) \abs{u_a} \, \dz.
  	\end{equation*}
  	For every $a\in (0,a_1)$, using the continuous embedding $D_\gamma(\mathbb{R}^N)\hookrightarrow L^1(\mathbb{R}^N, \abs{g})$, Lemma \ref{lemma:1}, with embedding constant $C_2 = C_2(N,\gamma,g)$ and the uniform boundeness of $\{u_a\}$ in $D_\gamma(\mathbb{R}^N)$, Theorem \ref{thm:1}, we obtain
  	\begin{equation*}
  		\frac{\norm{u_a}_\gamma^2}{2} \ge \delta - a C_2 \norm{u_a}_\gamma\ge \delta - aC_2 C = \delta - aC_3,
  	\end{equation*}
  	where $C_3 = C_2C$. Now if we choose $a_3$ such that $0<a_3<\min\left\{\frac{\delta}{C_3}, a_2\right\}$, then
  	\begin{equation}
  		\label{eq:25}
  			\frac{\norm{u_a}_\gamma^2}{2} \ge \delta_0 = \delta - a_3 C_3 >0, \quad\forall a\in (0,a_3).
  	\end{equation}
  		Since $u_a$ is a weak solution of \eqref{eq:SP}, then 
  	\begin{equation*}
  		\int_{\mathbb{R}^N} \nabla_\gamma u_a \nabla_\gamma \varphi\,\dz  = \int_{\mathbb{R}^N} g(z) f_a(u_a)\varphi\,\dz, \quad \forall \varphi\in D_\gamma(\mathbb{R}^N).
  	\end{equation*}
For  $\varphi=u_a$ we have
\begin{equation*}
	\int_{\mathbb{R}^N} \abs{\nabla_\gamma u_a}^2 \, \dz =\int_{\mathbb{R}^N} g(z) f_a(u_a) u_a \, \dz
\end{equation*}
So, by the estimation found in \eqref{eq:25}
\begin{equation*}
	\int_{\mathbb{R}^N} g(z) f_a(u_a) u_a \, \dz \ge 2\delta_0>0,\quad \forall a\in(0,a_3).
\end{equation*}
Thus, \eqref{eq:bounds} gives
\begin{equation*}
	2\delta_0 \le \int_{\mathbb{R}^N} g(z) \left( C \left(\abs{u_a}^{\alpha-1} + \abs{u_a}\right) +a\right)\, \dz \le \left( C \left( \norm{u_a}_\infty^{\alpha-1} + \norm{u_a}_\infty \right)+ a\right)\norm{g}_1.
\end{equation*}
This implies that $\norm{u_a}_\infty\ge \beta$ for some $\beta>0$ for all $a\in (0,a_3)$, taking $a_3$ smaller if necessary.
  \end{proof}

\section{Positive solutions}
As in the previous Section, given a sequence $a_j\to 0$ as $j\to\infty$, let us consider the solution $u_j = u_{a_j}$ for \eqref{eq:SP}. We denote by $u$ the limit of $u_j$ as in Lemma \ref{lemma:4}.

Now consider the function $f_0$
\[
f_0(t) = 
\begin{cases}
	f(t), \quad &\hbox{if $t\ge 0$};\\
	0,\quad &\hbox{if $t<0$}.
\end{cases}
\]
\begin{prop}
	\label{prop:1}
 $u$ is a weak solution of
\begin{equation}
	\label{eq:26}
	-\Delta_\gamma u = g(z) f_0(u)\quad \hbox{in $\mathbb{R}^N$}. 
\end{equation}
Moreover, $u\in L^p(\mathbb{R}^N)\cap C(\mathbb{R}^N)$ for every $p\in [2^*_\gamma,\infty]$.
\end{prop}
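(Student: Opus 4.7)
The plan is to pass to the limit $a_j \to 0^+$ in the weak formulation satisfied by $u_j = u_{a_j}$ and then to read off the regularity of $u$ from the uniform estimates already obtained. The starting point is the convergence information collected in Lemma \ref{lemma:4}: along a subsequence, $u_j \to u$ strongly in $D_\gamma(\mathbb{R}^N)$, hence in $L^{2^*_\gamma}(\mathbb{R}^N)$, pointwise a.e.\ on $\mathbb{R}^N$, and dominated by some $h\in L^{2^*_\gamma}(\mathbb{R}^N)$; moreover $\norm{u_j}_\infty \le C$ uniformly in $j$.

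First I would verify the pointwise convergence $f_{a_j}(u_j(z)) \to f_0(u(z))$ for a.e.\ $z\in \mathbb{R}^N$. This follows from the a.e.\ convergence $u_j \to u$ together with the observation that, for every fixed $t\in\mathbb{R}$, the map $a \mapsto f_a(t)$ is continuous at $a=0$ with limit $f_0(t)$, by a direct check on the three pieces of the definition of $f_a$ (using $f(0)=0$ to match at $t=0$). Given a test function $\varphi\in D_\gamma(\mathbb{R}^N)$, the bilinear term on the left-hand side of the weak formulation of \eqref{eq:SP} satisfies
\[
\int_{\mathbb{R}^N} \nabla_\gamma u_j \, \nabla_\gamma \varphi \, \dz \longrightarrow \int_{\mathbb{R}^N} \nabla_\gamma u \, \nabla_\gamma \varphi \, \dz
\]
by the weak (actually strong) convergence in $D_\gamma(\mathbb{R}^N)$. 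For the nonlinear term I would exploit \eqref{eq:bounds} together with the dominating function $h$ and the integrability of $g$ to produce the majorant
\[
\bigl| g(z) f_{a_j}(u_j(z)) \varphi(z) \bigr| \le g(z) \abs{\varphi(z)} \bigl( \varepsilon \abs{h(z)} + C \abs{h(z)}^{\alpha-1} + a_1 \bigr),
\]
which lies in $L^1(\mathbb{R}^N)$ by essentially the H\"older/embedding computation already carried out in the proof of Lemma \ref{lemma:3} (combining $g\in L^1\cap L^\infty$ with $h,\varphi \in L^{2^*_\gamma}$). The dominated convergence theorem then gives
\[
\int_{\mathbb{R}^N} g(z) f_{a_j}(u_j) \varphi \, \dz \longrightarrow \int_{\mathbb{R}^N} g(z) f_0(u) \varphi \, \dz,
\]
so $u$ is a weak solution of \eqref{eq:26}.

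For the regularity assertion, the uniform $L^\infty$ bound on $\{u_j\}$ from Lemma \ref{lemma:4} passes to the a.e.\ limit, yielding $u\in L^\infty(\mathbb{R}^N)$ with $\norm{u}_\infty \le C$. Combined with $u\in D_\gamma(\mathbb{R}^N)\hookrightarrow L^{2^*_\gamma}(\mathbb{R}^N)$, elementary interpolation gives $u\in L^p(\mathbb{R}^N)$ for every $p\in [2^*_\gamma,\infty]$. Since $g\in L^\infty(\mathbb{R}^N)$ and $f_0(u)\in L^\infty(\mathbb{R}^N)$, the right-hand side of \eqref{eq:26} is bounded, so continuity of $u$ follows verbatim from Step 3 of the proof of Lemma \ref{lemma:4}, invoking the non-homogeneous Harnack inequality for $X$-elliptic operators \cite[Theorem 5.5]{gutierrezlanconelli} directly for $u$. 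The one genuinely delicate point I expect is producing a single $L^1$ majorant valid on the whole of $\mathbb{R}^N$ for dominated convergence; this is precisely where the combined $L^1\cap L^\infty$ assumption on $g$ is essential and where the argument parallels the compactness machinery already set up in Lemma \ref{lemma:3}.
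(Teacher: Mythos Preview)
Your proof is correct and follows essentially the same route as the paper: pass to the limit in the weak formulation using the strong convergence $u_j \to u$ in $D_\gamma(\mathbb{R}^N)$ established in Lemma~\ref{lemma:4}, then inherit the regularity from the estimates already in place. The only cosmetic differences are that the paper splits $|f_{a_j}(u_j)-f_0(u)| \le |f_{a_j}(u_j)-f_0(u_j)| + |f_0(u_j)-f_0(u)|$, bounding the first piece by $a_j$ and handling the second via the continuity of $K_0'$ (and works first with $\varphi\ge 0$, then decomposes $\varphi=\varphi^+-\varphi^-$), whereas you run a single dominated-convergence argument directly for arbitrary $\varphi$; for the regularity the paper re-invokes the full argument of Lemma~\ref{lemma:4} on $u$, while you more economically pass the uniform $L^\infty$ bound through the a.e.\ limit and interpolate.
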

\begin{proof}
	Since $a_j\to 0$, there exists $j_1\in\mathbb{N}$ such that $a_j\le a_2$ for all $j\ge j_1$ (with $a_2$ we denote the parameter $a_2$ as in the statement of Lemma \ref{lemma:4}). 
	Now, for every $\varphi\in D_\gamma(\mathbb{R}^N)$ with $\varphi\ge 0$ we show that
	\[
	\int_{\mathbb{R}^N} g(z) f_j (u_j) \varphi(z)\,\dz \to \int_{\mathbb{R}^N} g(z) f_0(u)\varphi(z)\,\dz, \quad\hbox{as $j\to \infty$}.
	\]
We split
\[
\abs{f_j(u_j) - f_0(u)} \le \abs{f_j(u_j) - f_0 (u_j)} + \abs{f_0(u_j) - f_0(u)}.
\]
By the continuity of $K_0'$, 
\[
\int_{\mathbb{R}^N} g(z) \abs{f_0(u_j) - f_0(u)} \varphi(z)\, \dz \to 0, \quad\hbox{as $j\to \infty$}.
\]
Further, $\abs{f_j(u_j) - f_0(u_j)} \le a_j$. Therefore,
\begin{multline*}
	\int_{\mathbb{R}^N} g(z) \abs{f_j(a_j) - f_0(u)} \varphi(z)\,\dz \\
	\le a_j \int_{\mathbb{R}^N}g(z)\varphi(z)\,\dz + \int_{\mathbb{R}^N} g(z) \abs{f_0(u_j) - f_0(u)} \varphi(z)\,\dz \to 0\quad\hbox{as}~ j\to\infty.
\end{multline*}
From the weak formulation 
\begin{displaymath}
	\int_{\mathbb{R}^N} \nabla_\gamma u_j (z) \nabla_\gamma \varphi(z)\,\dz = \int_{\mathbb{R}^N} g(z) f_j(u_j) \varphi(z)\,\dz, \forall\varphi\in D_\gamma(\mathbb{R}^N), \quad\varphi\ge 0.
\end{displaymath}
Taking the limit as $j\to\infty$ gives
\begin{displaymath}
	\int_{\mathbb{R}^N} \nabla_\gamma u(z) \nabla_\gamma \varphi(z)\,\dz = \int_{\mathbb{R}^N} g(z) f(u) \varphi(z)\,\dz, \forall\varphi\in D_\gamma(\mathbb{R}^N), \quad\varphi\ge 0.
\end{displaymath}
Now, for any $\varphi\in D_\gamma(\mathbb{R}^N)$, we write $\varphi = \varphi^+ - \varphi^-$. We see that the above identity holds for both $\varphi^+$ and $\varphi^-$, so
\begin{equation}
	\label{eq:26b}
	\int_{\mathbb{R}^N} \nabla_\gamma u(z) \nabla_\gamma \varphi(z)\,\dz = \int_{\mathbb{R}^N} g(z) f(u) \varphi(z)\,\dz,\quad \forall\varphi\in D_\gamma(\mathbb{R}^N),
\end{equation}
 that is, $u$ is a weak solution of \eqref{eq:26}.
 With the same argument of Lemma \ref{lemma:4}, we obtain $u\in L^p(\mathbb{R}^N)$, $p\in [2^*_\gamma,\infty]$ and $u$ is a continuous function in $\mathbb{R}^N$.
\end{proof}

\begin{prop}
\label{prop:positive}
	The function $u$ is positive.
\end{prop}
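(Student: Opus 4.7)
My plan is to establish the positivity of $u$ in three stages: nonnegativity, nontriviality, and strict positivity via an integral representation through the fundamental solution of the Grushin operator.

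\textbf{Nonnegativity.} I would test the weak formulation \eqref{eq:26b} against the negative part $\varphi = u^- := \max\{-u,0\}$, which belongs to $D_\gamma(\mathbb{R}^N)$ because $u$ does. Since $f_0(t) = 0$ for every $t \le 0$, the product $f_0(u)\, u^-$ vanishes identically, and the standard identity $\nabla_\gamma u \cdot \nabla_\gamma u^- = -|\nabla_\gamma u^-|^2$ a.e.\ yields
\begin{displaymath}
    -\int_{\mathbb{R}^N} |\nabla_\gamma u^-|^2 \, \dz = \int_{\mathbb{R}^N} g(z)\, f_0(u)\, u^- \, \dz = 0,
\end{displaymath}
so $u^- \equiv 0$ and hence $u \ge 0$ a.e.\ in $\mathbb{R}^N$.

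\textbf{Nontriviality.} Combining the uniform lower bound $\norm{u_a}_\gamma \ge C_1 > 0$ on $(0,a_1)$ (the lemma preceding Lemma \ref{lemma:5}) with the strong convergence $u_j \to u$ in $D_\gamma(\mathbb{R}^N)$ obtained in the proof of Lemma \ref{lemma:4}, I get $\norm{u}_\gamma \ge C_1 > 0$. As an additional refinement, the uniform bound $\norm{u_j}_\infty \ge \beta > 0$ from Lemma \ref{lemma:5} passes to the limit in view of the dominated convergence argument used in Lemma \ref{lemma:4} (which delivers pointwise convergence after extracting a subsequence), ensuring that $u$ is not only nontrivial in $D_\gamma(\mathbb{R}^N)$ but also that $\{u > 0\}$ contains a nonempty open set by continuity of $u$ (Proposition \ref{prop:1}).

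\textbf{Strict positivity.} On this open set where $u > c > 0$, assumption \textbf{(f1)} together with \textbf{(f2)} ensure $f_0(u) = f(u) > 0$ (the Ambrosetti--Rabinowitz lower bound $F(t) \ge M_1 t^\theta - M_2$ from \eqref{eq:bounds_2} prevents $f$ from being identically zero on $(0,\infty)$). Since $-\Delta_\gamma u = g(z) f_0(u) \ge 0$ and $u$ is continuous and bounded, I would invoke the Green representation associated with the fundamental solution $\Gamma$ of $-\Delta_\gamma$, which is known to behave like $\Gamma(z,\xi) \sim C\, d(z,\xi)^{-(N_\gamma - 2)}$ with $d$ the intrinsic Grushin pseudo-distance. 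This gives
\begin{displaymath}
    u(z) = C \int_{\mathbb{R}^N} \frac{g(\xi) f_0(u(\xi))}{d(z,\xi)^{N_\gamma - 2}} \, \mathrm{d}\xi,
\end{displaymath}
and the integrand is nonnegative and strictly positive on a set of positive measure. Consequently, $u(z) > 0$ at every point $z$ where the integral is finite; in particular, hypothesis \textbf{(g1)} guarantees this at every $z \in (\{0\}\times\mathbb{R}^\ell)\setminus\{(0,0)\}$, which is the stratum on which the Grushin operator degenerates.

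\textbf{Main obstacle.} The delicate point is making the Green representation rigorous across the degeneracy stratum $\{0\}\times\mathbb{R}^\ell$, where classical elliptic theory fails and no strong maximum principle is available (as noted in Remark \ref{rem:concluding}). The role of \textbf{(g1)} is precisely to ensure that the convolution with the fundamental solution is finite at such points, so that strict positivity propagates from the interior region where $u$ is continuous and positive to the singular stratum via the integral formula rather than via Harnack.
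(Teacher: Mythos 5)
Your first two steps (testing \eqref{eq:26b} with the negative part for nonnegativity, and combining the uniform lower bound $\norm{u_a}_\gamma \ge C_1$ with the strong convergence $u_j \to u$ in $D_\gamma(\mathbb{R}^N)$ for nontriviality) are correct and match the paper. The gap is in your third step. The representation
\begin{displaymath}
	u(z) = C \int_{\mathbb{R}^N} \frac{g(\xi) f_0(u(\xi))}{d(z-\xi)^{N_\gamma-2}}\,\mathrm{d}\xi
\end{displaymath}
is \emph{not} available for a general $z \in \mathbb{R}^N$: the Grushin operator is translation invariant only in the $y$-variables, so $\Gamma(\cdot - z)$ is a fundamental solution with pole at $z$ only when $z$ lies in the degenerate stratum $\Sigma = \{0\}\times\mathbb{R}^\ell$. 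This is precisely why \eqref{eq:27} is stated only for a.e.\ $z \in \Sigma$, and Remark \ref{rem:concluding} explains that no explicit global Green function with two-sided bounds of the form $d(z,\xi)^{-(N_\gamma-2)}$ is known. Your argument therefore yields positivity at best on $\Sigma \setminus \{(0,0)\}$ — and even there only under \textbf{(g1)}, which is not among the hypotheses of Proposition \ref{prop:positive} but is an extra assumption reserved for the final theorem. The proposition asserts $u>0$ on all of $\mathbb{R}^N$, which is what is needed later.

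The mechanism the paper actually uses, and which is absent from your proposal, is the opposite of what you describe: positivity is obtained first \emph{away} from $\Sigma$, where $\Delta_\gamma$ is locally uniformly elliptic, via classical regularity and the strong maximum principle (dichotomy $u \equiv 0$ or $u>0$ on each bounded $\Omega \subset \mathbb{R}^N\setminus\Sigma$); the degenerate alternative is then propagated across $\Sigma$ and between connected components of $\mathbb{R}^N\setminus\Sigma$ by the unique-continuation result of Kogoj--Lanconelli, and finally excluded by the nontriviality you already established. Two smaller points: \textbf{(f1)}--\textbf{(f2)} only force $f(t)>0$ for $t>t_0$, so on the open set where $u>c$ you cannot conclude $f(u)>0$ unless $c>t_0$; the fact that $g f_0(u)$ is positive on a set of positive measure should instead be read off from $\int_{\mathbb{R}^N} g(z) f_0(u)u\,\dz = \norm{u}_\gamma^2 > 0$. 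And passing the bound $\norm{u_j}_\infty \ge \beta$ to the limit is not justified by pointwise a.e.\ convergence alone; uniform convergence on $\Sigma$ is only established later, in Proposition \ref{prop:uniform}.
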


\begin{proof}
	Taking the test function $\varphi = u^- = \min \left\{u,0 \right\}$ in \eqref{eq:26b} we find that $\norm{u^-}_\gamma = 0$, so $u\ge 0$ a.e. in $\mathbb{R}^N$. 
	
	Let $\Sigma = \{0\} \times \mathbb{R}^\ell$ be the degenerate set for the Grushin operator.
	If $\Omega$ is any bounded domain contained in $\mathbb{R}^N \setminus \Sigma$, then $\Delta_\gamma$ is a uniformly elliptic operator on $\Omega$. Hence $u \in C^2(\Omega)$ and either $u \equiv 0$ on $\Omega$ or $u>0$ in $\Omega$. In the former case, it follows from \cite[Proposition 2.5]{kogojlanconelli} that $u \equiv 0$ on the connected component of $\mathbb{R}^N \setminus \Sigma$ which contains $\Omega$. Therefore $u \equiv 0$ on $\mathbb{R}^N \setminus \Sigma$.
	
	Now, it is clear that $\Sigma$ has empty interior, and it follows from \cite[Theorem 9, Chapter 1]{kelley} that $\mathbb{R}^N \setminus \Sigma$ is dense in $\mathbb{R}^N$. Recalling that $u$ is a continuous function, we conclude that $u \equiv 0$ on $\Sigma$. To this stage the function $u$ might be zero in (the interior of) some connected component of $\mathbb{R}^N \setminus \Sigma$ and strictly positive in (the interior of) some other component. However this is again impossible, since \cite[Proposition 2.5]{kogojlanconelli} applies again on any ball which meets two such components across $\Sigma$. Hence either $u \equiv 0$ on $\mathbb{R}^N$, or $u>0$ on $\mathbb{R}^N$. The former is not admissible due to the Lemma \ref{lemma:5}.
\end{proof} 
	
\begin{rem}
When $\gamma$ is an even integer, a very elegant proof of the positivity of $u$ can be devised. We follow an idea of \cite{biagi2022sublinear}. When $\gamma$ is an integer, it is known that $-\Delta_\gamma$ satisfies the H\"{o}rmander condition. Recalling that $g>0$ and $f_0 \geq 0$, we see that $\int_{\mathbb{R}^N} \nabla_\gamma u \nabla_\gamma \varphi \, \dz \geq 0$ for every $\varphi \in C_0^\infty(\mathbb{R}^N)$ such that $\varphi \geq 0$. Hence $\int_{\mathbb{R}^N} u (-\Delta_\gamma \varphi) \, \dz \geq 0$ for every such $\varphi$. Recalling that $u$ is a continuous function, it follows that $u$ is a viscosity subsolution of the operator $-\Delta_\gamma$. The strong maximum principle proved in \cite[Corollary 1.4]{bardi2019new} yields that either $u \equiv 0$ or $u>0$ in $\mathbb{R}^N$. The first option, again, is not admissible due to the Lemma \ref{lemma:5}.
\end{rem}

\

Now we are going to show that $u_j\to u$ in $L^\infty(\{0\} \times \mathbb{R}^\ell)$. We essentially follow \cite{alves, biswas2023study}, although a few important differences will arise.

For $z = (x,y) \in \mathbb{R}^N = \mathbb{R}^m\times\mathbb{R}^\ell$, we denote by
\begin{equation}
	\label{eq:d}
d(z) = \left( \abs{x}^{2\left(\gamma +1\right)} + (\gamma+1)^2 \abs{y}^2\right)^{\frac{1}{2\left(\gamma+1\right)}}
\end{equation}
the homogeneous norm associated to $\Delta_\gamma$. Note that this norm is homogeneous of degree $1$ with respect to the anisotropic dilatation
\begin{displaymath}
	\delta_\lambda(x,y) = (\lambda x, \lambda^{\gamma+1}y),\quad \lambda >0.
\end{displaymath}
and, for $\gamma=0$, it reduces to the usual Euclidean distance in $\mathbb{R}^N$.

The function 
\begin{equation*}
	%\label{eq:fundamental}
	\Gamma(z) = \frac{C}{d(z)^{N_\gamma -2}}
\end{equation*}

is a fundamental solution for the Grushin operator with singularity at $z=0$, see  \cite{beckner} or \cite[Appendix B]{ambrosio2003}.

The constant $C$ is a suitable positive constant that depends on $m,\ell$ and $\gamma$. Precisely,
\[
C^{-1} = (N_\gamma -2) \int\limits_{\{z \mid d(z) = 1\}} \frac{\abs{x}^{2\gamma}}{\left(\abs{x}^{2\left(1+2\gamma\right)}+ \left(1+\gamma\right)^2 \abs{y}^2\right)^{1/2}} \,\mathrm{d}S,
\]
where $S$ denotes the surface measure of the manifold $\left\{ z \mid d(z)=1 \right\}$.

We recall the following definition.
\begin{definizione}
	A function $u\colon\mathbb{R}^N\to\mathbb{R}$ is radially symmetric (radial for short) if there exists a function $v\colon\mathbb{R}\to\mathbb{R}$ such that
	\[
	u(z) = v(r)
	\]
	where $r = d(z)$, for all $z\in\mathbb{R}^N$.
\end{definizione}
For $R>0$ we set $B_R = \left\{ \xi\in\mathbb{R}^N \mid d(\xi) < R \right\}$.

Let us consider a spherical shell $\Omega = B_{R_2}\setminus \overline{B_{R_1}}$, with $0\le R_1 < R_2\le +\infty$. If $u$ is a radial function, by \cite[Eq. (2.4)]{ambrosio2003hardy} we get the formula
\begin{equation}
	\label{eq:integral}
	\int_{\Omega} u(d(\xi)) \, \mathrm{d}\xi = s_N \int_{R_1}^{R_2} \rho^{N_\gamma -1} u(\rho) \,\mathrm{d}\rho,
\end{equation}
 where $s_N$ is an explicit constant that depends only on $N$.

 \begin{prop}
 \label{prop:uniform}
 	$\norm{u_j - u}_{L^\infty(\{0\} \times \mathbb{R}^\ell)} \to 0$ as $j\to \infty$. 
 \end{prop}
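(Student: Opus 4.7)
The plan is to combine a potential-type representation on the degenerate set $\Sigma:=\{0\}\times\mathbb{R}^\ell$ with hypothesis $(\textbf{g}_1)$ and a dominated convergence argument, following the strategy of \cite{alves,biswas2023study} but adapted to the Grushin setting.

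\emph{First step: integral representation on $\Sigma$.} The key observation is that, although $\Delta_\gamma$ is not translation invariant in the $x$-variable, it is invariant under translations in $y$. Consequently, for every $z=(0,y)\in\Sigma$ the fundamental solution of $-\Delta_\gamma$ with pole at $z$ is obtained from $\Gamma$ by $y$-translation, and is given explicitly by $\xi\mapsto C/d(z-\xi)^{N_\gamma-2}$ (the fact that $d(\cdot)$ is even in each block of variables makes this expression unambiguous). Inserting this fundamental solution into the weak equations for $u_j$ and $u$ (Theorem \ref{thm:1} and Proposition \ref{prop:1}) I expect to obtain, for every $z\in\Sigma\setminus\{(0,0)\}$,
\[
u_j(z)-u(z)=C\int_{\mathbb{R}^N}\frac{g(\xi)\bigl[f_{a_j}(u_j(\xi))-f_0(u(\xi))\bigr]}{d(z-\xi)^{N_\gamma-2}}\,\mathrm d\xi.
\]

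\emph{Second step: pointwise convergence.} By Lemma \ref{lemma:4} and Proposition \ref{prop:1}, $\{u_j\}$ is uniformly bounded in $L^\infty(\mathbb{R}^N)$, $u_j\to u$ almost everywhere, and $a_j\to 0$. Hence $f_{a_j}(u_j)-f_0(u)\to 0$ a.e.\ and is dominated by a constant $K$ independent of $j$. For $z\in\Sigma\setminus\{(0,0)\}$, hypothesis $(\textbf{g}_1)$ makes the kernel $\xi\mapsto g(\xi)/d(z-\xi)^{N_\gamma-2}$ integrable on $\mathbb{R}^N$, so dominated convergence gives $u_j(z)-u(z)\to 0$ pointwise on $\Sigma\setminus\{(0,0)\}$.

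\emph{Third step: from pointwise to uniform.} I would upgrade this in two regimes. On compact subsets of $\Sigma$, the $C^{1,\beta}$-regularity noted after Theorem \ref{thm_main} makes $\{u_j-u\}$ equicontinuous, so Ascoli-Arzel\`a plus the pointwise limit of Step 2 yields uniform convergence on compacts (including a neighbourhood of $(0,0)$, where the value of $u_j$ and $u$ is zero at $0$ would otherwise be delicate). For the behaviour as $d(z)\to\infty$ along $\Sigma$, the same representation, together with $g\in L^1\cap L^\infty$ and the uniform bound on $f_{a_j}(u_j),f_0(u)$, provides a tail estimate of the form $|u_j(z)|+|u(z)|\lesssim\int g(\xi)/d(z-\xi)^{N_\gamma-2}\,\mathrm d\xi\to 0$ uniformly in $j$, so both terms decay uniformly at infinity. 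Combining the two regimes yields $\|u_j-u\|_{L^\infty(\Sigma)}\to 0$.

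The principal obstacle is rigorously justifying the integral representation in Step 1. Because $\Delta_\gamma$ is not translation invariant in $x$, the classical convolution with $\Gamma(\,\cdot\,-\xi)$ is not available; the formula holds only because the evaluation point lies on $\Sigma$, where the $y$-translation symmetry can be exploited. Passing from the weak formulation to the pointwise identity will require approximating the singular fundamental solution by admissible test functions, truncating at infinity, and using the decay of $u_j,u$ established via Lemma \ref{lemma:4} and the potential bound to conclude that the boundary-type contributions vanish in the limit. A secondary delicate point is the behaviour at the singular point $(0,0)$, where $(\textbf{g}_1)$ is needed precisely to prevent the potential from blowing up.
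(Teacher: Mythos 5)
Your first two steps are essentially the paper's: the argument does rest on the representation $u(z)=C\int_{\mathbb{R}^N} g(\xi)f_0(u(\xi))\,d(z-\xi)^{2-N_\gamma}\,\mathrm d\xi$ for $z\in\{0\}\times\mathbb{R}^\ell$ (and likewise for $u_j$), which the paper simply quotes from \cite{dambrosiomitidieripohozaev} rather than re-deriving, so the ``principal obstacle'' you identify is dispatched by a citation. However, two points in your plan are genuine gaps. First, you invoke $(\textbf{g}_1)$ to make the kernel integrable, but this proposition is used to prove the \emph{first} part of Theorem \ref{thm_main}, where $(\textbf{g}_1)$ is not assumed; it only enters later, for strict positivity. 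Fortunately it is not needed: $g\in L^1\cap L^\infty$ alone gives $\int_{\{d(z-\xi)<r\}}g(\xi)\,d(z-\xi)^{2-N_\gamma}\,\mathrm d\xi\le C\norm{g}_\infty r^2$ (by the polar-coordinate formula \eqref{eq:integral}, after the $y$-translation $\xi\mapsto\xi-z$ available precisely because $z$ lies on the degenerate set) and $\int_{\{d(z-\xi)\ge r\}}g(\xi)\,d(z-\xi)^{2-N_\gamma}\,\mathrm d\xi\le r^{2-N_\gamma}\norm{g}_1$, both bounds being uniform in $z$; the special treatment of $(0,0)$ is then also unnecessary.

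Second, and more seriously, your Step 3 upgrade from pointwise to uniform convergence via Ascoli--Arzel\`a requires equicontinuity of $\{u_j-u\}$ \emph{uniformly in $j$}; the remark after Theorem \ref{thm_main} only asserts $C^{1,\beta}$ regularity of each individual solution and provides no uniform modulus of continuity, so as written this step does not close. The detour is also unnecessary, because the representation already yields an estimate uniform in $z$: splitting at $d(z-\xi)=r$, the near part of $|u_j(z)-u(z)|$ is bounded by $C\norm{g}_\infty\sup_\xi|f_j(u_j(\xi))-f_0(u(\xi))|\,r^2$, small uniformly in $j$ and $z$ thanks to the uniform $L^\infty$ bounds of Lemma \ref{lemma:4}, while the far part is bounded by $r^{2-N_\gamma}\int_{\mathbb{R}^N}g\,|f_j(u_j)-f_0(u)|\,\mathrm d\xi$, an integral independent of $z$ that tends to $0$ by dominated convergence and the $L^p$ convergence $u_j\to u$. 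Up to replacing the crude $L^\infty$ bound on the singular part by a H\"older estimate, this is exactly what the paper does, and it gives $\norm{u_j-u}_{L^\infty(\{0\}\times\mathbb{R}^\ell)}\to0$ in one stroke, with no equicontinuity argument and no separate analysis at infinity.
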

\begin{proof}
By \cite[Sect. 3.1]{dambrosiomitidieripohozaev} and recalling that $u$ is a continuous function, we have the representation formula
\begin{equation}
		\label{eq:27}
		u(z) = C \int_{\mathbb{R}^N} \frac{g(\xi) f_0 (u(\xi))}{d(z-\xi)^{N_\gamma -2}} \, \mathrm{d}\xi \quad\hbox{for a.e. $z \in \{0\} \times \mathbb{R}^\ell$}.
	\end{equation}
		Moreover, since $u_j$ is a solution of \eqref{eq:SP}, we also have 
			\begin{equation}
				\label{eq:28}
			u_j(z) = C \int_{\mathbb{R}^N} \frac{g(\xi) f_j (u_j(\xi))}{d(z-\xi)^{N_\gamma -2}} \, \mathrm{d}\xi \quad\hbox{for a.e. $z \in \{0\} \times \mathbb{R}^\ell$}.
		\end{equation}
Using \eqref{eq:27} and \eqref{eq:28} we estimate $\abs{u-u_j}$ as follows: for every $z \in \{0\} \times \mathbb{R}^\ell$,
\begin{multline}
	\label{eq:29}
	\abs{u(z)-u_j(z)} \le C \int_{\mathbb{R}^N} g(\xi) \frac{\abs{f_j(u_j(\xi)) - f_0(u(\xi))}}{d(z-\xi)^{N_\gamma-2}} \, \mathrm{d}\xi \\ \le C \left(
	\int_{B_1} g(\xi) \frac{\abs{f_j(u_j(\xi)) - f_0(u(\xi))}}{d(z-\xi)^{N_\gamma-2}} \, \mathrm{d}\xi + \int_{\mathbb{R}^N\setminus B_1} g(\xi) \frac{\abs{f_j(u_j(\xi)) - f_0(u(\xi))}}{d(z-\xi)^{N_\gamma-2}} \, \mathrm{d}\xi \right).
\end{multline}
Take $\delta>1$. Applying the H\"{o}lder's inequality with the conjugate pairs $(\delta,\tilde{\delta})$ we estimate the first integral of \eqref{eq:29} as
\begin{multline}
	\label{eq:30}
	\int_{B_1} g(\xi) \frac{\left| f_j(u_j(\xi)) - f_0(u(\xi)) \right|}{d(z-\xi)^{N_\gamma-2}} \, \mathrm{d}\xi \le \left( \int_{B_1} \frac{1}{d(z-\xi)^{(N_\gamma -2)\delta}} \, \mathrm{d}\xi \right)^{\frac{1}{\delta}} \times \\
	\left( \int_{B_1} g(\xi)^{\tilde{\delta}} \left| f_j(u_j(\xi)) - f_0(u(\xi)) \right|^{\tilde{\delta}} \, \mathrm{d}\xi \right)^{\frac{1}{\tilde{\delta}}}.
\end{multline}

We compute, using \eqref{eq:integral}
\[
\int_{B_1} \frac{1}{d(z-\xi)^{(N_\gamma -2)\delta}} \, \mathrm{d}\xi = s_N \int_{0}^{1} \frac{\rho^{N_\gamma -1}}{\rho^{(N_\gamma-2)\delta}} d\rho \le C(N_\gamma).
\]
Moreover, the second term in the inequality \eqref{eq:30} converges to zero. In fact, proceeding as in Lemma \ref{lemma:4}, $u_j\to u$ in $L^p,\, p\in [2^*_\gamma,\infty)$. Then we can apply the Dominated Convergence Theorem.

Next, the second integral of \eqref{eq:30} has the following bound:
\begin{displaymath}
	\int_{\mathbb{R}^N\setminus B_1} g(\xi) \frac{\abs{f_j(u_j(\xi)) - f_0(u(\xi))}}{d(z-\xi)^{N_\gamma-2}} \, \mathrm{d}\xi \le \int_{\mathbb{R}^N\setminus B_1} g(\xi) \abs{f_j(u_j(\xi)) - f_0(u(\xi))}\,\mathrm{d}\xi.
\end{displaymath}
Again, by the generalized dominated convergence theorem, 
\begin{displaymath}
\lim_{j \to +\infty} \int_{\mathbb{R}^N\setminus B_1} g(\xi) \frac{\abs{f_j(u_j(\xi)) - f_0(u(\xi))}}{d(z-\xi)^{N_\gamma-2}} \, \mathrm{d}\xi = 0.
\end{displaymath}
Therefore, \eqref{eq:29} yields $u_j\to u$ in $L^\infty(\{0\} \times \mathbb{R}^\ell)$ as $j\to \infty$.
\end{proof}
The immediate corollary concludes the proof of the first statement of Theorem \ref{thm_main}.
\begin{cor}
	$u_j\ge 0$ a.e. in $\{0\} \times \mathbb{R}^\ell$ for $j$ large enough.
\end{cor}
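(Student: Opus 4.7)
The plan is to deduce the corollary as an immediate consequence of Proposition \ref{prop:uniform} and Proposition \ref{prop:positive}. First I would recall that, by Proposition \ref{prop:positive}, the limit $u$ is strictly positive on all of $\mathbb{R}^N$, and in particular on the slice $\{0\}\times\mathbb{R}^\ell$; by Proposition \ref{prop:uniform}, the convergence $u_j\to u$ is uniform on this slice. Setting $\varepsilon_j := \norm{u_j-u}_{L^\infty(\{0\}\times\mathbb{R}^\ell)}\to 0$, the triangle inequality then yields $u_j(z)\ge u(z)-\varepsilon_j$ for every $z$ on the slice. On any compact $K\subset\{0\}\times\mathbb{R}^\ell$, continuity and strict positivity of $u$ give $\inf_K u>0$, and hence $u_j\ge 0$ on $K$ as soon as $\varepsilon_j<\inf_K u$.

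The main obstacle I anticipate is the behaviour at infinity, because $u(z)\to 0$ as $d(z)\to\infty$, so the crude estimate $u_j\ge u-\varepsilon_j$ only guarantees $u_j\ge-\varepsilon_j$ in the tail of $\{0\}\times\mathbb{R}^\ell$. To circumvent this I would return to the representation formula
\[
u_j(z) = C \int_{\mathbb{R}^N} \frac{g(\xi) f_{a_j}(u_j(\xi))}{d(z-\xi)^{N_\gamma-2}}\,\mathrm{d}\xi
\]
already employed in Proposition \ref{prop:uniform}, and use the asymptotic $d(z-\xi)\sim d(z)$ (valid for bounded $\xi$ and large $d(z)$) to show that $d(z)^{N_\gamma-2}u_j(z)$ tends, as $d(z)\to\infty$, to $C\int g(\xi)f_0(u(\xi))\,\mathrm{d}\xi$ uniformly for $j$ large. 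This limiting quantity is strictly positive since $u>0$ solves $-\Delta_\gamma u=gf_0(u)$ and is not identically zero by Lemma \ref{lemma:5}. Patching the compact core (where $\inf u>0$ controls $u_j$ via uniform convergence) with the tail (where the representation formula forces $u_j(z)\ge c/d(z)^{N_\gamma-2}>0$) then gives $u_j\ge 0$ on the whole slice for $j$ large enough, and continuity of $u_j$ (Lemma \ref{lemma:4}) upgrades the almost-everywhere statement to an everywhere one.
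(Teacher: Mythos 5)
The paper offers no argument for this corollary at all: it is declared ``immediate'' from Proposition \ref{prop:uniform}, i.e.\ from the uniform convergence $\norm{u_j-u}_{L^\infty(\{0\}\times\mathbb{R}^\ell)}\to 0$ together with the positivity of $u$ from Proposition \ref{prop:positive}. That is exactly the first half of your argument, and your observation that this only settles the question on compact subsets of the slice --- because $u\in L^{2^*_\gamma}(\mathbb{R}^N)$ forces $\inf u=0$ at infinity, so $u_j\ge u-\varepsilon_j$ gives nothing in the tail --- identifies a genuine subtlety that the paper glosses over. So on the diagnostic side you are ahead of the text.

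The problem is with your proposed repair of the tail. The claim that $d(z)^{N_\gamma-2}u_j(z)$ converges to $C\int_{\mathbb{R}^N}g(\xi)f_0(u(\xi))\,\mathrm{d}\xi$ \emph{uniformly for $j$ large} is not available with the hypotheses in force at this point. In the representation formula for $u_j$ the nonlinearity satisfies only $f_{a_j}(t)\ge -a_j$, so after splitting off a ball $B_R$ the best lower bound in the tail has the shape $u_j(z)\ge c\,d(z)^{2-N_\gamma}-C a_j\int_{\mathbb{R}^N}g(\xi)\,d(z-\xi)^{2-N_\gamma}\,\mathrm{d}\xi$; to conclude $u_j(z)\ge 0$ for \emph{all} large $d(z)$ at a fixed $j$ you must bound $d(z)^{N_\gamma-2}\int_{\mathbb{R}^N}g(\xi)\,d(z-\xi)^{2-N_\gamma}\,\mathrm{d}\xi$ uniformly in $z$ --- and that is precisely hypothesis \textbf{(g$_1$)}, which (together with \textbf{(f$_3$)}, needed to convert $\abs{f_j(u_j)-f_0(u)}$ into $M\abs{u_j-u}$) is only assumed in the \emph{next} theorem, not in this corollary. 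Without \textbf{(g$_1$)} your estimate yields $u_j\ge 0$ only on a bounded, $j$-dependent portion of the slice, which is no better than the compact-set argument. In short, your second step proves (a version of) the subsequent theorem rather than the corollary as stated; as a proof of the corollary under hypotheses \textbf{(f$_1$)}--\textbf{(f$_2$)} alone it has a gap, and that gap is in fact inherited from the paper's own unproved ``immediate'' claim.
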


Let us now conclude the proof of Theorem \ref{thm_main}.
\begin{thm}
	If \textbf{(f3)} and \textbf{(g1)} are satisfied then $u_j>0$ a.e. in $\{0\} \times \mathbb{R}^\ell$.
\end{thm}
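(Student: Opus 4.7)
The plan is to transfer the strict positivity of the limit $u$ (established in Proposition \ref{prop:positive}) to each $u_j$ with $j$ sufficiently large, by refining the representation-formula argument already used in Proposition \ref{prop:uniform} and exploiting the two new hypotheses. Precisely, (f3) gives a Lipschitz comparison between $f_0(u_j)$ and $f_0(u)$, while (g1) ensures that the kernel $\int_{\mathbb{R}^N} g(\xi)/d(z-\xi)^{N_\gamma-2}\,\mathrm{d}\xi$ decays no faster than $d(z)^{-(N_\gamma-2)}$ on the axis, matching the decay one expects for $u$.

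First, I would subtract the representation formulas \eqref{eq:27} and \eqref{eq:28} to obtain, for $z \in \{0\} \times \mathbb{R}^\ell$,
\[
u_j(z) = u(z) + C\int_{\mathbb{R}^N} \frac{g(\xi)\left[f_{a_j}(u_j(\xi)) - f_0(u(\xi))\right]}{d(z-\xi)^{N_\gamma-2}}\,\mathrm{d}\xi.
\]
Combining the pointwise bound $|f_{a_j}(t) - f_0(t)| \le a_j$ with the Lipschitz estimate $|f_0(u_j) - f_0(u)| \le L\,|u_j - u|$, valid with a constant $L$ depending only on the common $L^\infty$ bound supplied by Lemma \ref{lemma:4} and Proposition \ref{prop:1}, this yields
\[
u_j(z) \ge u(z) - C\bigl(a_j + L\,\Phi_j(z)\bigr)\int_{\mathbb{R}^N} \frac{g(\xi)}{d(z-\xi)^{N_\gamma-2}}\,\mathrm{d}\xi,
\]
where $\Phi_j(z)$ collects a weighted average of $|u_j - u|$. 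Splitting the domain as in the proof of Proposition \ref{prop:uniform}---H\"older on $B_1(z)$ and the trivial bound $d(z-\xi) \ge 1$ outside---together with the strong convergence $u_j \to u$ in $L^p(\mathbb{R}^N, |g|)$ supplied by Lemma \ref{lemma:1}, shows that $\Phi_j(z) \to 0$ uniformly in $z$. Hypothesis (g1) controls the remaining kernel integral by $M(z)\,d(z)^{-(N_\gamma-2)}$, with $M(z)$ locally bounded on the punctured axis and asymptotically $\|g\|_1$ at infinity.

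Next, I would produce a matching lower bound $u(z) \ge c_0 \bigl(d(z)+R_0\bigr)^{-(N_\gamma-2)}$. By Proposition \ref{prop:positive} one has $u>0$ on $\mathbb{R}^N$, and passing Lemma \ref{lemma:5} to the limit yields $\|u\|_\infty \ge \beta > 0$; combined with the Ambrosetti-Rabinowitz condition (f2), which forces $f(t)>0$ for $t>t_0$, continuity of $u$ produces (after possibly shrinking $a$) a bounded set $E$ of positive measure on which $f(u) \ge c > 0$. Inserting $E$ into the representation formula for $u$ and applying the quasi-triangle inequality for the homogeneous Grushin distance delivers the desired lower bound, uniformly in $z \in \{0\}\times\mathbb{R}^\ell$. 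Together with the previous estimate, this gives $u_j(z) \ge (c_0 - o_j(1))\,d(z)^{-(N_\gamma-2)} > 0$ for $d(z)$ sufficiently large and $j$ large. On the complementary compact region of the punctured axis, the uniform convergence $u_j \to u$ from Proposition \ref{prop:uniform}, together with the strict positivity and continuity of $u$, gives $u_j > 0$ for $j$ large.

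The hard part will be making the Lipschitz error term uniform in $z$: the proof of Proposition \ref{prop:uniform} handled one $z$ at a time, so the H\"older splitting must now be chosen to be $z$-independent, and (g1) is essential to absorb the kernel into a bound with the correct $d(z)^{-(N_\gamma-2)}$ decay that matches the lower bound on $u(z)$. A secondary subtlety is producing the lower bound on $u$, which rests on (f2) to guarantee a region where $f\circ u$ is bounded below by a positive constant; if $\|u\|_\infty$ were smaller than $t_0$ one would have to adapt this step using only (f1), exploiting that $f_0\circ u$ cannot vanish identically because $u$ itself does not.
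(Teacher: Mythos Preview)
Your plan matches the paper's proof in every structural respect: subtract the representation formulas \eqref{eq:27}--\eqref{eq:28}, split $f_{a_j}(u_j)-f_0(u)$ into a Lipschitz part (via (\textbf{f}$_3$)) plus an $a_j$-part, use (\textbf{g}$_1$) to control $\int g/d(z-\cdot)^{N_\gamma-2}$ by $C/d(z)^{N_\gamma-2}$, establish $\liminf_{d(z)\to\infty} d(z)^{N_\gamma-2}u(z)>0$ from the representation of $u$ (your ``set $E$ where $f(u)\ge c$'' and the paper's Fatou argument reach the same conclusion), and finish by combining the far-field estimate with Proposition~\ref{prop:uniform} on the remaining compact portion of the axis.

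The one step that does not close as you have written it is the treatment of the Lipschitz integral. You factor it as $\Phi_j(z)\cdot\int g/d(z-\cdot)^{N_\gamma-2}$, with $\Phi_j$ the weighted average of $|u_j-u|$, and claim $\Phi_j\to 0$ uniformly in $z$ via the H\"older splitting of Proposition~\ref{prop:uniform}. But that splitting only shows the \emph{numerator} $\int g|u_j-u|/d(z-\cdot)^{N_\gamma-2}$ is $o_j(1)$ uniformly in $z$; since the \emph{denominator} $\int g/d(z-\cdot)^{N_\gamma-2}$ is itself of order $d(z)^{-(N_\gamma-2)}$ as $d(z)\to\infty$, the ratio $\Phi_j(z)$ need not vanish uniformly, and the error term you obtain is merely $o_j(1)$ rather than $o_j(1)\cdot d(z)^{-(N_\gamma-2)}$---too weak to beat the lower bound $u(z)\ge c_0\,d(z)^{-(N_\gamma-2)}$. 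The paper resolves this more simply: it pulls $\|u_j-u\|_\infty$ out of the integral (quoting Proposition~\ref{prop:uniform}) and \emph{then} applies (\textbf{g}$_1$), obtaining directly
\[
|u_j(z)-u(z)|\le C\bigl(M\|u_j-u\|_\infty+a_j\bigr)\,\frac{C(g)}{d(z)^{N_\gamma-2}},
\]
so that $\sup_{z} d(z)^{N_\gamma-2}|u_j(z)-u(z)|\to 0$ with the matching decay built in from the start.
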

\begin{proof}
	For each $j\in\mathbb{N}$, since $f_j$ is locally Lipschitz \textbf{(f3)} and $0\le u_j,u\le C$, we have
	\begin{displaymath}
		\abs{f_j(u_j(z)) - f_0(u(z))} \le M \abs{u_j(z) - u(z)},
	\end{displaymath}
	for some $M>0$. For $z\in\{0\} \times \mathbb{R}^\ell \setminus\{(0,0)\}$, using \eqref{eq:27} and \eqref{eq:28}, we get
	\begin{displaymath}
		\abs{u_j(z) - u(z)} \le C \left(M \int_{\mathbb{R}^N} g(\xi) \frac{\abs{u_j(\xi) - u(\xi)}}{d(z-\xi)^{N_\gamma -2}}\, \mathrm{d}\xi + a_j \int_{\mathbb{R}^N} \frac{g(\xi)}{d(z-\xi)^{N_\gamma-2}} \, \mathrm{d}\xi \right).
	\end{displaymath}
Since $g$ satisfies \textbf{(g1)}, from the above inequality and the previous theorem we get
\begin{equation*}
	\abs{u_j(z) - u(z)} \le C \left(M\norm{u_j - u}_\infty + a_n\right) \frac{C(g)}{d(z)^{N_\gamma -2}}.
\end{equation*}
Hence
\begin{equation}
	\label{eq:31}
	\sup_{z\in\mathbb{R}^N\setminus \{0\}}  d(z)^{N_\gamma -2} \abs{u_j(z) - u(z)} \to 0 ~\hbox{as $j\to\infty$}.
\end{equation} 

We claim that
\begin{equation}
\label{claim:3}
   \lim_{d(z)\to +\infty} d(z)^{N_\gamma -2} u(z) >0.
\end{equation}

Using \eqref{eq:27} it follows that
\begin{multline}
	\label{eq:32}
	\lim_{d(z)\to +\infty} d(z)^{N_\gamma -2} u(z) = C \lim_{d(z)\to +\infty} \int_{\mathbb{R}^N} \frac{g(\xi) f_0(u(\xi)) d(z)^{N_\gamma -2}}{d(z-\xi)^{N_\gamma -2}}\,\mathrm{d}\xi \\
	\ge C \int_{B_R} \frac{g(\xi) f_0(u(\xi)) d(z)^{N_\gamma -2}}{d(z-\xi)^{N_\gamma -2}}\,\mathrm{d}\xi
\end{multline}
for any $R>0$. For any $R>0$ there exists $z\in\{0\} \times \mathbb{R}^\ell$ such that $d(z) >2R + 1$. Hence
\begin{displaymath}
	d(z-\xi)^{N_\gamma -2} \ge \abs{d(z) - d(\xi)}^{N_\gamma -2} \ge  \abs{d(z) - R}^{N_\gamma -2} \ge 2^{2-N_\gamma} \left(1+d(z)\right)^{N_\gamma-2},
\end{displaymath}
for every $\xi\in B_R$. Using the above estimate, for $\xi\in B_R$ we get
\begin{displaymath}
	\frac{g(\xi) f_0(u(\xi)) d(z)^{N_\gamma -2}}{d(z-\xi)^{N_\gamma -2}}\le 2^{N_\gamma -2} g(\xi) f_0(u(\xi)).
\end{displaymath}
Furthermore,
\begin{displaymath}
	\frac{g(\xi) f_0(u(\xi)) d(z)^{N_\gamma -2}}{d(z-\xi)^{N_\gamma -2}}\to g(\xi) f_0(u(\xi))
\end{displaymath}
a.e. in $B_R$, as $d(z)\to \infty$. Therefore, the Dominated Convergence Theorem yields
\begin{displaymath}
	\lim_{d(z)\to +\infty} \int_{B_R} \frac{g(\xi) f(u(\xi)) d(z)^{N_\gamma -2}}{d(z-\xi)^{N_\gamma -2}} \,\mathrm{d}\xi = \int_{B_R} g(\xi) f_0(u(\xi))\,\mathrm{d}\xi.
\end{displaymath}
So, from \eqref{eq:32} we conclude that
\begin{displaymath}
	\lim_{d(z)\to +\infty} d(z)^{N_\gamma -2} u(z) \ge C \int_{B_R} g(\xi) f_0(u(\xi))\,\mathrm{d}\xi.
\end{displaymath}
Letting $R\to \infty$ and applying the Fatou's lemma we get 
\begin{displaymath}
\lim_{d(z)\to\infty} d(z)^{N_\gamma -2} u(z) \ge C \int_{\mathbb{R}^N} g(\xi) f_0(u(\xi)) \,\mathrm{d}\xi.
\end{displaymath}
So \eqref{claim:3} follows by \eqref{eq:27} and Proposition \ref{prop:positive}.

Therefore, by \eqref{eq:31} there exists $j_2\in\mathbb{N}$ and $R>>1$ such that for $j\ge j_2,\,u_j>0$ a.e. on $(\{0\} \times \mathbb{R}^\ell)\setminus B_R$. Moreover, since $u$ is continuous, there exists $\eta>0$ such that $u>\eta$ on $\overline{B_R}$. Therefore, from Proposition \ref{prop:uniform}, there exists $j_3\in\mathbb{N}$ such that for $j\ge j_3, \,u_j>0$ a.e. on $\overline{B_R} \cap (\{0\} \times \mathbb{R}^\ell)$. Thus, by choosing $j_4 = \max \left\{j_2, j_3 \right\}$, we see that for $j\ge j_4, \,u_j >0$ a.e. on $\{0\} \times \mathbb{R}^\ell$. This completes the proof.
\end{proof}

\begin{rem} \label{rem:concluding}
	The validity of the previous argument relies heavily on the representation formula \eqref{eq:27}, which is typically derived from the existence of a Green function associated to the differential operator. For linear differential operators with constant coefficients, the existence of a fundamental solution allows one to construct (continuous) solutions via convolution. by exploiting the invariance of the operator under translations. The Grushin operator $\Delta_\gamma$ is a differential operator with variable coefficients, and in particular it is invariant under translations only in the subspace $\{0\} \times \mathbb{R}^\ell$ of $\mathbb{R}^N$. So, even if a fundamental solution with singularity at a fixed point is available, it is not trivial to use it in order to derive an explicit Green function. As we have seen, it was proved in \cite{dambrosiomitidieripohozaev} that \eqref{eq:27} holds true at points of the form $(0,y)$ at which the weak solution $u$ is continuous. It is interesting to observe that the authors of \cite{dambrosiomitidieri} proved the equivalence of a representation formula like \eqref{eq:27} and of a Liouville-type result for the operator $\Delta_\gamma$, but only for $C^2$ solutions. However a full regularity theory for $\Delta_\gamma$ is not available, this approach remains unapplicable.

Furthermore, the existence of an \emph{approximate} Green function for $\Delta_\gamma$ was proved in \cite{loiudice2006} and exploited in \cite{alves2024brezis}. Unfortunately the known properties of it are too weak for our purposes, and would lead to replace (\textbf{g}$_1$) with a completely implicit assumption.
\end{rem}

\bibliographystyle{plainnat}
\bibliography{Grushin_SP_problems}{}

\bigskip

Paolo Malanchini, Dipartimento di Matematica e Applicazioni, Universit\`a degli Studi di Milano Bicocca, via R. Cozzi 55, I-20125 Milano, Italy.

\medskip 

Giovanni Molica Bisci, Dipartimento di Scienze Pure e Applicate, Universit\`a di Urbino Carlo Bo, Piazza della Repubblica 13, Urbino, Italy.

\medskip

Simone Secchi, Dipartimento di Matematica e Applicazioni, Universit\`a degli Studi di Milano Bicocca, via R. Cozzi 55, I-20125 Milano, Italy.
\end{document}